\newcommand\hcancel[2][black]{\setbox0=\hbox{$#2$}%
\rlap{\raisebox{.25\ht0}{\textcolor{#1}{\rule{0.7\wd0}{0.75pt}}}}#2} 
\newcommand\hcancelt[2][black]{\setbox0=\hbox{$#2$}%
\rlap{\raisebox{.25\ht0}{\textcolor{#1}{\hspace{0.3mm}\rule{0.7\wd0}{0.75pt}}}}#2} 
\newtheorem{thm}{Theorem}[section]
\newtheorem{cor}{Corollary}[section]
\newtheorem{rem}{Remark}[section]
\theoremstyle{definition}
\numberwithin{algorithm}{section}
\numberwithin{equation}{section}
\renewcommand{\theequation}{\thesection.\arabic{equation}}
\def\simgt{\,\hbox{\lower0.6ex\hbox{$>$}\llap{\raise0.3ex\hbox{$\sim$}}}\,}
\def\simlt{\,\hbox{\lower0.6ex\hbox{$<$}\llap{\raise0.3ex\hbox{$\sim$}}}\,}
\def\simgteq{\,\hbox{\lower0.6ex\hbox{$\ge$}\llap{\raise0.6ex\hbox{$\sim$}}}\,}
\def\simlteq{\,\hbox{\lower0.6ex\hbox{$\le$}\llap{\raise0.6ex\hbox{$\sim$}}}\,}
\def\applteq{\,\hbox{\lower0.6ex\hbox{$\le$}\llap{\raise0.8ex\hbox{$\approx$}}}\,}
\def\applt{\,\hbox{\lower0.6ex\hbox{$<$}\llap{\raise0.5ex\hbox{$\approx$}}}\,}
\DeclareMathAlphabet\mathbfcal{OMS}{cmsy}{b}{n}
\DeclareMathOperator{\csch}{csch}
\DeclareMathOperator{\Span}{span}
\DeclareMathOperator{\Mvec}{vec}
\def\user@resume{resume}
\def\user@intermezzo{intermezzo}
\newcounter{previousequation}
\newcounter{lastsubequation}
\newcounter{savedparentequation}
\renewenvironment{subequations}[1][]{%
      \def\user@decides{#1}%
      \setcounter{previousequation}{\value{equation}}%
      \ifx\user@decides\user@resume 
           \setcounter{equation}{\value{savedparentequation}}%
      \else  
      \ifx\user@decides\user@intermezzo
           \refstepcounter{equation}%
      \else
           \setcounter{lastsubequation}{0}%
           \refstepcounter{equation}%
      \fi\fi
      \protected@edef\theHparentequation{%
          \@ifundefined {theHequation}\theequation \theHequation}%
      \protected@edef\theparentequation{\theequation}%
      \setcounter{parentequation}{\value{equation}}%
      \ifx\user@decides\user@resume 
           \setcounter{equation}{\value{lastsubequation}}%
         \else
           \setcounter{equation}{0}%
      \fi
      \def\theequation  {\theparentequation  \alph{equation}}%
      \def\theHequation {\theHparentequation \alph{equation}}%
      \ignorespaces
}{%
  \ifx\user@decides\user@resume
       \setcounter{lastsubequation}{\value{equation}}%
       \setcounter{equation}{\value{previousequation}}%
  \else
  \ifx\user@decides\user@intermezzo
       \setcounter{equation}{\value{parentequation}}%
  \else
       \setcounter{lastsubequation}{\value{equation}}%
       \setcounter{savedparentequation}{\value{parentequation}}%
       \setcounter{equation}{\value{parentequation}}%
  \fi\fi
  \ignorespacesafterend
}
\newcommand{\C}[1]{\mathcal{#1}}
\newcommand{\F}[1]{\mathbf{#1}}
\newcommand{\bsC}[1]{\boldsymbol{\C{#1}}}
\newcommand{\MB}[1]{\mathbb{#1}}
\newcommand{\MBS}{\MB{S}}
\newcommand{\MBR}{\mathbb{R}}
\newcommand{\MBRP}{\MBR^+}
\newcommand{\MBRzer}{\MBR_0}
\newcommand{\MBRzerP}{\MBRzer^+}
\newcommand{\MBZ}{\mathbb{Z}}
\newcommand{\MBZP}{\MBZ^+}
\newcommand{\MBZzer}{\MBZ_0}
\newcommand{\MBZzerP}{\MBZzer^+}
\newcommand{\MBZe}{\MBZ_e}
\newcommand{\MBZeP}{\MBZe^+}
\newcommand{\MBT}{\mathbb{T}}
\newcommand{\MBJ}{\mathbb{J}}
\newcommand{\MBF}{\mathfrak{F}}
\newcommand{\MBC}{\mathfrak{C}}
\newcommand{\Fthe}{\F{\Theta}}
\newcommand{\canczer}[1]{#1_{\hcancel{0}}}
\newcommand{\cancbra}[1]{\hcancel{[}#1\hcancelt{]}}
\newcommand{\sumd}{\sideset{}{'}}
\newcommand{\bmc}{\bm{c}}
\newcommand{\bT}{\bar T}
\newcommand{\hu}{\hat{u}}
\newcommand{\bmx}{\bm{x}}
\newcommand{\bbmx}{\bar \bmx}
\newcommand{\bmt}{\bm{t}}
\newcommand{\tbmx}{\tilde{\bmx}}
\newcommand{\tbmu}{\tilde{\bmu}}
\newcommand{\tbmxs}{\tbmx^*}
\newcommand{\tbmus}{\tbmu^*}
\newcommand{\bmy}{\bm{y}}
\newcommand{\bmh}{\bm{h}}
\newcommand{\bmxs}{\bm{x}^*}
\newcommand{\bmu}{\bm{u}}
\newcommand{\bmus}{\bm{u}^*}
\newcommand{\bmf}{\bm{f}}
\newcommand{\bmF}{\bm{F}}
\newcommand{\dbmx}{\dot{\bm{x}}}
\newcommand{\bmzer}{\bm{\mathit{0}}}
\newcommand{\bmone}{\bm{\mathit{1}}}
\newcommand{\hbmx}{\hat\bmx}
\newcommand{\FOmega}{\F{\Omega}}
\newcommand{\foralla}{\,\forall_{\mkern-6mu a}\,}
\newcommand{\foralle}{\,\forall_{\mkern-6mu e}\,}
\newcommand{\foralls}{\,\forall_{\mkern-6mu s}\,}
\def\BState{\State\hskip-\ALG@thistlm}
    \newcommand*{\algrule}[1][\algorithmicindent]{\makebox[#1][l]{\hspace*{.5em}\thealgruleextra\vrule height \thealgruleheight depth \thealgruledepth}}%
\newcommand*{\thealgruleextra}{}
\newcommand*{\thealgruleheight}{.75\baselineskip}
\newcommand*{\thealgruledepth}{.25\baselineskip}
\def\ALG@printindent{%
    \ifnum \theALG@nested>0
        \ifx\ALG@text\ALG@x@notext
        \else
            \unskip
            \addvspace{-1pt}
            \ALG@printindent@tempcnta=1
            \loop
                \algrule[\csname ALG@ind@\the\ALG@printindent@tempcnta\endcsname]%
                \advance \ALG@printindent@tempcnta 1
            \ifnum \ALG@printindent@tempcnta<\numexpr\theALG@nested+1\relax
            \repeat
        \fi
    \fi
    }%
\patchcmd{\ALG@doentity}{\noindent\hskip\ALG@tlm}{\ALG@printindent}{}{\errmessage{failed to patch}}
\newbox\statebox
\newcommand{\myState}[1]{%
    \setbox\statebox=\vbox{#1}%
    \edef\thealgruleheight{\dimexpr \the\ht\statebox+1pt\relax}%
    \edef\thealgruledepth{\dimexpr \the\dp\statebox+1pt\relax}%
    \ifdim\thealgruleheight<.75\baselineskip
        \def\thealgruleheight{\dimexpr .75\baselineskip+1pt\relax}%
    \fi
    \ifdim\thealgruledepth<.25\baselineskip
        \def\thealgruledepth{\dimexpr .25\baselineskip+1pt\relax}%
    \fi
    \State #1%
    \def\thealgruleheight{\dimexpr .75\baselineskip+1pt\relax}%
    \def\thealgruledepth{\dimexpr .25\baselineskip+1pt\relax}%
}
\begin{document}
\begin{frontmatter}
\title{Numerical Solution of Nonlinear Periodic Optimal Control Problems Using a Fourier Integral Pseudospectral Method}
\author[XMUM,Assiut]{Kareem T. Elgindy\corref{cor1}}
\ead{kareem.elgindy@(xmu.edu.my;gmail.com)}
\address[XMUM]{Mathematics Department, School of Mathematics and Physics, Xiamen University Malaysia, Sepang 43900, Malaysia}
\address[Assiut]{Mathematics Department, Faculty of Science, Assiut University, Assiut 71516, Egypt}
\cortext[cor1]{Corresponding author}

\begin{abstract}
This paper presents a Fourier integral pseudospectral (FIPS) method for a general class of nonlinear, periodic optimal control (OC) problems with equality and/or inequality constraints and sufficiently smooth solutions. In this scheme, the integral form of the problem is collocated at an equispaced set of nodes, and all necessary integrals are approximated using highly accurate Fourier integration matrices (FIMs). The proposed method leads to a nonlinear programming problem (NLP) with algebraic constraints, which is solved using a direct numerical optimization method. Sharp convergence and error estimates are derived for Fourier series, interpolants, and quadratures used for smooth and continuous periodic functions. Two nonlinear examples are considered to show the accuracy and efficiency of the numerical scheme. 
\end{abstract}
\begin{keyword}
Fourier interpolation; Integral reformulation; Integration matrix; Periodic optimal control; Pseudospectral method.
\end{keyword}
\end{frontmatter}
\section{Introduction}
\label{Int}
The improvements in performances and efficiency induced by periodic operations and controls have been increasingly valuable in the current world. The optimization of periodic processes and phenomena leads naturally to periodic optimal control (OC) problems, which has been the subject of several studies and applications such as computing periodic orbits for space systems, flight enhancements and optimal mission planning, automobile test-driving, reduction of railway noise and vibration effects in mechanical systems, reduction of energy demand of continuous distillation processes, controlling periodic adsorbers and reactors, the design of walking robots, production planning, sustainable harvesting of ecological, biological, and energy resources, determining fast switching periodic protocols for COVID-19; cf. 
\cite{Mulholland1975148,dorato1979periodic,Bernstein1980kz,DeKlerk1982253,
pittelkau1993optimal,Sachs1993572,arcara1997active,Kasturi1998499,
bausa2001reducing,van2003optimization,mombaur2005open,xiao2006optimal,
williams2006direct,sager2008fast,Sachs2011,Guo2011305,Sachs2012,
Sachs201215,belyakov2014constant,zuyev2017isoperimetric,caruso2018semi,
gao2019hypersonic,fu2020harvesting,lippiello2022estimating,
jin2022railway,zhang2022optimal,Elgindy2022b}. For autonomous problems, optimal periodic controls that improve the steady-state solution exist and can be derived using the $\pi$-test ; cf. \cite{Guardabassi1971kz,Bernstein1980kz}.

Several methods were presented in the literature to solve periodic OC problems. The maximum principle was perhaps the standard approach to solve periodic OC problems in the 1970s and 1980s; cf. \cite{partain1971line,kowler1972optimal,matsubara1973periodic,noldus1974comments,
getz1979optimal,bittanti1986optimal,colonius1986global,rink1988optimal,
nitka1989approximate}. More recent applications of the maximum principle for solving periodic OC problems can be found in \cite{Wang2003429,Bayen2015750,Yan2016847,rouot2017optimal,bettiol2019sub,korolev2020solution,
bayen2020optimal,ali2021maximizing}. Other methods include a describing function approach \cite{Bertele1972kz}, Fourier series expansions \cite{Evans1979622,epperlein2020frequency,moretti2021quadratic}, Hermite collocation \cite{Dickmanns1984137}, asymptotic expansions \cite{Dmitriev1987219,Chuang1988678,Evans1987343,Kurina200823}, shooting methods \cite{Sachs1993572,Maurer1998185,vanNoorden20034115,Houska20062693,
sager2008fast}, finite-difference schemes \cite{Maurer1998185,Williams20062144,zhang2022optimal}, Fourier pseudospectral (PS) method \cite{Elnagar2004707}, Hermite-Simpson, Hermite-Legendre-Gauss-Lobatto, Chebyshev-Legendre, and Jacobi-PS methods \cite{Williams20062144}, value iteration method \cite{azzato2008applying}, Markov chain and symplectic methods \cite{Wang2009160}, multiharmonic finite element (FEM) approximations \cite{Langer20161267,Wolfmayr20201050,
liang2021robust}, preconditioning techniques \cite{axelsson2019note}, adaptive dynamic programming \cite{pang2020adaptive}, modified Q-Learning method \cite{dao2021q}, etc. For periodic problems with smooth solutions, however, the series expansion and function classes that have proven to be the most successful by far are Fourier series expansions and Fourier interpolant functions for many reasons including, but not limited to: (i) both classes converge exponentially fast with increasing expansion terms faster than any polynomial rates \cite{gottlieb1977numerical,Fornberg1978373,Adams1986Slab,
Fornberg1996practical,Canuto1988}, (ii) the periodicity conditions are satisfied automatically when the solution is represented by any of the two classes without additional constraints, (iii) the coefficients of differentiated/integrated Fourier expansions/spectral interpolants are easily obtained \cite{Fornberg1996practical}, (iv) the conversion between Fourier coefficients and the function values at the collocation/interpolation nodes is very fast through the FFT algorithm, (v) Fourier basis functions can represent a wider range of frequencies than Chebyshev and Legendre basis functions often used in common direct PS methods for solving OC problems, which means that they can capture more detail in the solution, leading to greater accuracy, (vi) Legendre basis functions are only defined on a finite interval, which makes them less appealing for problems with periodic boundary conditions, as they can introduce discontinuities at the boundaries, (vii) for large expansion terms, the derivative of Fourier interpolants can be efficiently computed using the FFT in just $O(N \log_2 N)$ operations, where $N$ is the number of terms of Fourier approximation \cite{kopriva2009}, (viii) the Fourier differentiation matrix for the first derivative is skew-symmetric, which ensures stability of discretization for evolutionary, linear PDEs with time-varying coefficients \cite{Hairer2016751}, (ix) although the Fourier integration matrix (FIM) is a dense matrix, its first row is a zero row and the remaining rows form ``a row-wise element-twins matrix'' in the sense that each element in each row has exactly one twin element in the same row allowing for faster computations \cite{elgindy2019high}, and (x) the FIM constructed using equi-spaced nodes on any finite interval of length $T$ can be normalized and generated efficiently using a $T$-invariant constant ``basic/principle/generating/natural'' FIM, which can be constructed and stored offline and invoked later when running the solver codes \cite{elgindy2023optimal}. These reasons and more furnish little incentives to seek alternative bases than Fourier basis functions for periodic OC problems. 

In this work, we propose to solve generally nonlinear, periodic OC problems exhibiting sufficiently smooth solutions using an integral PS (IPS) method\footnote{An IPS method is a robust variant of a PS method that is aka as a PS integration method.} based on Fourier basis functions. Although PS methods are known for being easier, more flexible to implement, and computationally more efficient than the alternative Galerkin and tau approximations, especially in the presence of nonlinearities, variable coefficients, and boundary conditions; cf. \cite{Zang1982485,Peyret1996kz,Weideman2000465}, IPS methods have the additional advantage of reformulating the dynamical system equations into their integral form first as a prerequisite before the collocation step starts; thus, avoids the usual ill-conditioning associated with numerical differentiation processes. One may perform the integral reformulation by either a direct integration of the dynamical system equations in the presence of constant coefficients, or by representing the highest-order derivative of the solution involved in the problem by a nodal finite series in terms of its grid point values and then solve for those grid point values before successively integrating back to obtain the sought solution grid point values in a stable manner. Our proposed method follows the former approach and enjoys all of the aforesaid merits; in addition, it (i) can be easily programmed, (ii) can straightforwardly handle inequality constraints on the state- and control-variables, (iii) dispenses the need to solve for the adjoint variables associated with the maximum principle, which often require precise initial guesses, and runs without sophisticated transversality conditions, and (iv) directly discretizes the periodic OC problem on any time interval $[0, T]$, for $T > 0$, without the necessity to transform the domain first into the interval $[0, 2 \pi]$ as applied earlier in \cite{Elnagar2004707}. To the best of our knowledge, the present paper introduces the first direct Fourier IPS (FIPS) method to date for solving periodic OC problems exhibiting smooth solutions in which the OC problem is discretized into a nonlinear programming problem (NLP) in conjuction with FIMs by means of FIPS methods and the solution is sought in the physical space. For periodic OC problems exhibiting bang-bang solutions, the reader may consult our recent works in \cite{Elgindy2022b,elgindy2023optimal} in which the solutions can be recovered within excellent accuracies using an FIPS method and an adaptive h-IPS method\footnote{An adaptive h-IPS method has the ability to recover the discontinuous/non-smooth solutions with high accuracy via the decomposition of the solution interval into smaller mesh intervals or elements ($h$-refinement), and approximating the restricted solution on each element with a finite, nodal expansion series in terms of the solution grid point values by means of interpolation \cite{Elgindy2022b}.} composed through a predictor-corrector algorithm. 
For periodic fractional OC problems in which the system dynamics are described by fractional derivatives that preserve the periodicity of periodic functions, the reader may consult our recent works in \cite{elgindy2023fouriera,elgindy2023fourierb}. The reader may consult further \cite{ElgindyHareth2022a} for a list of the advantages of direct IPS methods compared with other common methods for solving OC problems, such as indirect and parameterization methods.

The organization of the paper is as follows. In the next section, we give some preliminary notations that we adopt throughout the paper. Section \ref{sec:PS1} contains a statement of the periodic OC problem. Section \ref{sec:FPSIIM} presents the necessary tools we shall use to discretize the OC problem. Section \ref{sec:CRFSPF1} presents a rigorous study and sharp estimates on the convergence rates and errors of Fourier series, interpolants, and quadratures for smooth $T$-periodic functions. Section \ref{sec:FPIMIRF1} outlines the method of approach. Numerical results on two test problems are contained in Section \ref{sec:CRAC1} followed by some concluding remarks in Section \ref{sec:Conc} and a future work in Section \ref{sec:FW1}.

\section{Preliminary Notations}
\label{sec:PN}
\noindent\textbf{Logical Symbols.} $\forall, \foralla, \foralle$, and $\foralls$ stand for the phrases ``for all,'' ``for any,'' ``for each,'' and ``for some,'' respectively.\\[0.5em]
\textbf{Set and List Notations.} The symbols $\MBC$ and $\MBF$ denote the sets of all complex-valued functions and all real-valued functions; moreover, $\canczer{\MBZ}, \MBZP, \MBZzerP, \MBZeP$, and $\MBRzerP$ denote the sets of non-zero integers, positive integers, non-negative integers, positive even integers, and non-negative real numbers, respectively. The notations $i:j:k$ or $i(j)k$ indicate a list of numbers from $i$ to $k$ with increment $j$ between numbers, unless the increment equals one where we use the simplified notation $i:k$. For example, $0:0.5:2$ simply means the list of numbers $0, 0.5, 1, 1.5$, and $2$, while $0:2$ means $0, 1$, and $2$. The set of any numbers $y_1, y_2, \ldots, y_n$ is represented by $\{y_{1:n}\}$. We define $\MBJ_n = \{0:n-1\}$ and $\MBJ'_n = \MBJ_n\backslash\{0\}\,\foralla n \in \MBZP$; moreover, $\MB{K}_N = \{-N/2:N/2\}$ and $\MB{K}'_N = \MB{K}\backslash\{N/2\}\,\foralla N \in \MBZeP$. Also, $\MBS_n = \left\{t_{0:n-1}\right\}$ is the set of $n$ equally-spaced points such that $t_j = T j/n\, \forall j \in \MBJ_n$.\\[0.5em]
\textbf{Function Notations.} For convenience, we shall denote $g(t_{n})$ by $g_n \foralla g \in \MBC$, unless stated otherwise.\\[0.5em]
\textbf{Space Notations.} $\MBT_T$ is the space of $T$-periodic, univariate functions $\foralla T \in \MBRP$. $C^k(\FOmega)$ is the space of $k$ times continuously differentiable functions on ${\FOmega}\,\forall k \in \MBZzerP$.\\[0.5em]
\textbf{Vector Notations.} We shall use the shorthand notation ${\bmt_N}\;(\text{or }$ $t_{0:N-1}^t)$ to stand for the column vector $[t_{0}, t_{1}, \ldots, t_{N-1}]^t$. $g_{0:N-1}$ and $g^{(0:n)}$ denote the column vector $[g_0, g_1, \ldots, g_{N-1}]^t$ and the column vector of derivatives $[g, g', \ldots, g^{(n)}]^t\,\forall n \in \MBZzerP$ in respective order. In general, $\foralla h \in \MBF$ and row/column vector $\bmy$ whose $i$th-element is $y_i \in \MBR$, the notation $h(\bmy)$ stands for a vector of the same size and structure of $\bmy$ such that $h(y_i)$ is the $i$th element of $h(\bmy)$. Moreover, by $\bmh(\bmy)$ or $h_{1:m}\cancbra{\bmy}$ with a stroke through the square brackets, we mean $[h_1(\bmy), \ldots, h_m(\bmy)]^t\,\foralla m$-dimensional column vector function $\bmh$, with the realization that the definition of each array $h_i(\bmy)$ follows the former notation rule $\foralle i$. If $\bmy$ is a vector function, say $\bmy = \bmy(t)$, then we write $h(\bmy(\bmt_N))$ and $\bmh(\bmy(\bmt_N))$ to denote $[h(\bmy(t_0)), h(\bmy(t_1)), \ldots, h(\bmy(t_{N-1}))]^t$ and $[\bmh(\bmy(t_0)), \bmh(\bmy(t_1)), \ldots, \bmh(\bmy(t_{N-1}))]^t$ in respective order. One can naturally extend these notations into higher dimensions; for instance, $h(\bmy(\bmt_N), \bmt_N)$ and $\bmh(\bmy(\bmt_N), \bmt_N)$ simply means the column vector $[h(\bmy(t_0), t_0), h(\bmy(t_1), t_1), \ldots,$ $h(\bmy(t_{N-1}), t_{N-1})]^t$ and the $N \times m$ matrix $[\bmh(\bmy(t_0), t_0), \bmh(\bmy(t_1), t_1), \ldots, \bmh(\bmy(t_{N-1}), t_{N-1})]^t$, respectively, and so on.\\[0.5em] 
\textbf{Interval Notations.} The specific interval $[0, c]$ is denoted by $\FOmega_c\,\forall c > 0$. For example, $[0, t_{n}]$ is denoted by ${\FOmega_{t_{n}}}$; moreover, ${\FOmega_{t_{0:N-1}}}$ stands for the list of intervals ${\FOmega_{t_{0}}}, {\FOmega_{t_{1}}}, \ldots, {\FOmega_{t_{N-1}}}$.\\[0.5em] 
\textbf{Integral Notations.} By closely following the convention for writing definite integrals introduced in \citep{elgindy2019high}, we denote $\int_0^{{t_{l}}} {h(t)\,dt}$ by $\C{I}_{{t_{l}}}^{(t)}h \foralla$ integrable $h \in \MBT_T$. If the integrand function $h$ is to be evaluated at any other expression of $t$, say $u(t)$, we express $\int_0^{{t_{l}}} {h(u(t))\,dt}$ with a stroke through the square brackets as $\C{I}_{{t_{l}}}^{(t)}h\cancbra{u(t)}$. We adopt the notation $\C{I}_{{\bmt_{N}}}^{(t)}h$ to denote the $N$th-dimensional column vector $\left[ {\C{I}_{{t_{0}}}^{(t)}h,\C{I}_{{t_{1}}}^{(t)}h, \ldots ,\C{I}_{{t_{N - 1}}}^{(t)}h} \right]^t$. Furthermore, we write $\C{I}_{{\bmt_{N}}}^{(t)}\bmh$ to denote the $N \times m$ matrix $\left[ \C{I}_{{t_{0}}}^{(t)}\bmh,\C{I}_{{t_{1}}}^{(t)}\bmh, \ldots ,\right.$ $\left.\C{I}_{{t_{N - 1}}}^{(t)}\bmh \right]^t\,\foralla m$-dimensional vector function $\bmh$.\\[0.5em] 
\textbf{Matrix Notations.} $\F{O}_n, \F{1}_n$, and $\F{I}_n$ stand for the zero, all ones, and the identity matrices of size $n$. $\F{C}_{n,m}$ indicates that $\F{C}$ is a rectangular matrix of size $n \times m$; moreover, $\F{C}_n$ denotes a row vector whose elements are the $n$th-row elements of $\F{C}$, except when $\F{C}_n = \F{O}_n, \F{1}_n$, or $\F{I}_n$, where it denotes the size of the matrix. For convenience, a vector is represented in print by a bold italicized symbol while a two-dimensional matrix is represented by a bold symbol, except for a row vector whose elements form a certain row of a matrix where we represent it in bold symbol as stated earlier. For example, $\bmone_n$ and $\bmzer_n$ denote the $n$-dimensional all ones- and zeros- column vectors, while $\F{1}_n$ and $\F{O}_n$ denote the all ones- and zeros- matrices of size $n$, respectively. Finally, the notations $[.;.]$ and $\Mvec(.)$ denote the usual vertical concatenation and vectorization of a matrix, respectively.

\section{Problem Statement}
\label{sec:PS1}
Let $m, n, p \in \MBZP, T \in \MBRP, \C{U} = \{\bmu: \MBRzerP \to \MBR^m\text{ s.t. }\bmu\text{ is a }$ $T\text{-periodic, smooth vector function}\}$, and consider the dynamical system model
\begin{subequations}
\begin{equation}\label{eq:BRKM1}
\dbmx(t) =  \bmf\left(\bmx(t),\bmu(t),t\right),\quad \forall t \in {\FOmega_T},
\end{equation}
subject to the periodic boundary conditions
\begin{equation}\label{eq:PBC1}
\bmx(0) = \bmx(T),
\end{equation}
and the inequality path constraints
\begin{equation}\label{eq:PBC2}
\bmc(\bmx(t), \bmu(t), t) \le \bmzer_p,
\end{equation}
where $\bmx: \MBRzerP \to \MBR^n, \bmu \in \C{U}, \bmf = (f_i)_{1 \le i \le n}: \MBR^n \times \MBR^m \times \MBRzerP \to \MBR^n$, and $\bmc = (c_i)_{1 \le i \le p}: \MBR^n \times \MBR^m \times \MBRzerP \to \MBR^p$ with $f_i, c_i \in C^{k}(\MBRzerP)\,\foralls k \ge 1$. For a given time period $T$, we aim to find the optimal $T$-periodic waveforms $\bmxs: \MBRzerP \to \MBR^n$ and $\bmus \in \C{U}$, which satisfy Conditions \eqref{eq:BRKM1}-\eqref{eq:PBC2} and minimize the performance index functional
\begin{equation}\label{eq:OC1}
J(\bmu) = \frac{1}{T} \C{I}_{T}^{(t)} {g}\cancbra{\bmx(t), \bmu(t), t},
\end{equation}
\end{subequations}
where $g: \MBR^n \times \MBR^m \times \MBRzerP \to \MBR$ such that $g \in C^{k}(\MBRzerP)\,\foralls k \ge 1$. We refer to this problem by Problem $\C{P}$. Such a problem is a periodic, finite-horizon OC problem in Lagrange form where $\bmx$ and $\bmu$ are the state and control variables, respectively. We assume that both $g$ and $\bm{f}$ are $T$-periodic. 
If we integrate both sides of Eq. \eqref{eq:BRKM1} over the time interval $\FOmega_t \foralls t \in {\FOmega_T}\backslash\{0\}$, we transform the OC problem into its integral form where the same performance index $J$ is minimized subject to the integral equation
\begin{equation}\label{eq:IDS1}
\bmx(t) = \bmx(0) + \C{I}_t^{(\tau)} {\bmf\cancbra{\bmx(\tau), \bmu(\tau), \tau}},
\end{equation}
and Conditions \eqref{eq:PBC1} and \eqref{eq:PBC2}. We refer to this integral form of Problem $\C{P}$ by Problem $\C{IP}$. Although Problems $\C{P}$ and $\C{IP}$ are mathematically equivalent, they are not necessarily numerically equivalent in floating-point arithmetic. In particular, Problem $\C{IP}$ often admits better approximate solutions in practice due to the well-conditioning of numerical integration operators in general; cf. see \cite{greengard1991spectral,heath2002scientific,elbarbary2007p,Elgindy20171,Elgindy2019b,Elgindy2020distributed} and the Refs. therein. 

\section{FPS Interpolation and Integration Matrices}
\label{sec:FPSIIM}
Let $v^*$ be the complex conjugate of $v\, \foralla v \in \MBC, N \in \MBZeP, t_{j} \in \MBS_N\,\forall j \in \MBJ_N, f_j = f(t_{j})\,\foralla f \in \MBT_T$, and consider the $N/2$-degree, $T$-periodic Fourier interpolant, ${I_N}f$, such that $\left( {{I_N}f} \right)_{0:N - 1} = f_{0:N - 1}$ so that
\begin{equation}\label{eq:FI1nn1}
{I_N}f(t) = \sumd\sum\limits_{\left| k \right| \le N/2} {{\tilde f_k} {e^{i{\omega _k}t}}},
\end{equation}
where ${\omega _{\alpha}} = \displaystyle{\frac{{2\pi \alpha}}{T}}\,\forall \alpha \in \MBR, {\tilde f_k}$ is the discrete Fourier interpolation coefficient given by
\[{\tilde f_k} = \frac{1}{N}\sum\limits_{j = 0}^{N - 1} {{f_j}{e^{ - i \omega_k {t_{j}}}}},\quad \forall k \in \MB{K}_N,\]
and the primed sigma denotes a summation in which the last term is omitted. Notice that $\tilde f_{N/2} = \tilde f_{-N/2}$, and the DFT pair is defined by
\begin{subequations}
\begin{empheq}[left=\empheqbiglbrace]{alignat=2}
  {{\tilde f}_k} &= \frac{1}{N}\sum\limits_{j = 0}^{N - 1} {{f_j}{e^{ - i{\omega _k}{t_{j}}}}}  = \frac{1}{N}\sum\limits_{j = 0}^{N - 1} {{f_j}{e^{ - i{{\hat \omega }_{jk}}}}}, &&\quad k \in \MB{K}'_N,\label{DFP1}\\
  {f_j} &= \sumd\sum\limits_{\left| k \right| \le N/2} {{{\tilde f}_k}{e^{i{\omega _k}{t_{j}}}}}  = \sumd\sum\limits_{\left| k \right| \le N/2} {{{\tilde f}_k}{e^{i{{\hat \omega }_{jk}}}}}, &&\quad \forall j \in \MBJ_N,\label{DFP2}
\end{empheq}
\end{subequations}
where ${{\hat \omega }_k} = 2\pi k/N\;\forall k$. Substituting Eq. \eqref{DFP1} into Eq. \eqref{eq:FI1nn1}, and then swapping the order of the summations, express the interpolant in terms of the following function grid point values form
\begin{equation}\label{eq:eqLF1}
{I_N}f(t) = \sum\limits_{j = 0}^{N - 1} {{f_j}{\C{F}_j}(t)},
\end{equation}
where ${\C{F}_j}(t)$ is the $N/2$-degree, $T$-periodic trigonometric Lagrange interpolating polynomial given by
\[{\C{F}_j}(t) = \frac{1}{N}\sumd\sum\limits_{\left| k \right| \le N/2} {e^{i{\omega _k}(t - {t_{j}})}} = {\left[ {\frac{1}{N}\sin \left( {\frac{{\pi N}}{T}\left( {t - {t_{j}}} \right)} \right)\cot \left( {\frac{\pi }{T}\left( {t - {t_{j}}} \right)} \right)} \right]_{t \ne {t_{j}}}},\]
$\forall j \in \MBJ_N$. Notice that $\C{F}_j(t_{l}) = \delta_{j,l}\,\forall j,l \in \MBJ_N$, where $\delta_{j,l}$ is the kronecker delta function of variables $j$ and $l$. The Fourier PS interpolation operator ${I_N}f$ defined by \eqref{eq:eqLF1} is an orthogonal projection on the space $\Span\left\{ {{e^{i \omega_k t}}:k \in \MB{K}'_N} \right\}$ with the discrete $L^2$ inner product
\[{(u,v)_N} = \frac{T}{N}\sum\limits_{j = 0}^{N - 1} {{u_j}v_j^*},\quad \foralla u, v \in \MBC.\]
Integrating ${I_N}f$ over the interval $\FOmega_{t_{l}}$ yields
\begin{equation}\label{eq:AppRedFIM1}
\C{I}_{{t_{l}}}^{(t)}({I_N}f) = \sum\limits_{j = 0}^{N - 1} {{\theta _{l,j}}{f_j}},\quad \forall l \in \MBJ_N,
\end{equation}
where 
\begin{equation}\label{eq:RedFIM1}
{\theta _{l,j}} = \frac{1}{N}\left[ {{t_{l}} + \frac{{Ti}}{{2\pi }} \sumd\sum\limits_{\scriptstyle \left| k \right| \le N/2\atop
\scriptstyle k \ne 0} {\frac{1}{k}{e^{ - i{\omega _k}{t_{j}}}}\left( {1 - {e^{i{\omega _k}{t_{l}}}}} \right)}} \right],\quad \forall l,j \in \MBJ_N,
\end{equation}
are the entries of the first-order square FIM, $\Fthe$, of size $N$; cf. \cite{Elgindy2022b}. The definite integrals of ${I_N}f$ over the intervals $\FOmega_{y_{0:M-1}}\,\foralls M$-random set of points $\{ {{y_{0:M-1}}}\} \subset \FOmega_T\backslash\{0\}:{y_{l}} \notin {\MBS_N}\forall M \in {\MBZ^ + },l \in \MBJ_M$ are given by
\begin{equation}\label{eq:AppRedFIM2}
\C{I}_{{y_{l}}}^{(t)}({I_N}f) = \sum\limits_{j = 0}^{N - 1} {{\hat \theta _{l,j}}{f_j}},\quad \forall l \in \MBJ_M,
\end{equation}
where 
\begin{equation}\label{eq:RedMFIM1}
{\hat{\theta}_{l,j}} = \C{I}_{{y_{l}}}^{(t)}{\C{F}_j} = \frac{1}{N}\left[ {{y_{l}} + \frac{{Ti}}{{2\pi }}\sumd\sum\limits_{\scriptstyle\left| k \right| \le N/2\atop
\scriptstyle k \ne 0} {\frac{1}{k}{e^{ - i{\omega _k}{t_j}}}\left( {1 - {e^{i{\omega _k}{y_{l}}}}} \right)}} \right],
\end{equation}
$\forall l \in \MBJ_M, j \in \MBJ_N,$ are the elements formulas of the associated rectangular FIM, $\F{\hat{\Theta}} = \left(\hat \theta_{l,j}\right): l \in \MBJ_M, j \in \MBJ_N$. One can further write Formulas \eqref{eq:AppRedFIM1} and \eqref{eq:AppRedFIM2} in matrix notation as
\[{{\C{I}_{{{\bmt_N}}}^{(t)}(I_Nf)}} = \Fthe f_{0:N - 1}\quad \text{and}\quad {{\C{I}_{{{\bm{y}_M}}}^{(t)}(I_Nf)}} = \F{\hat{\Theta}} f_{0:N - 1},\]
respectively. In the special case when $y_{l} = T$, Formula \eqref{eq:RedMFIM1} reduces to ${\hat{\theta}_{l,j}} = T/N\, \forall j \in \MBJ_N$. For convenience, we denote ${\hat{\theta}_{l,j}}$ by $\theta_{N,j}$ in this particular case and define $\Fthe_N = \frac{T}{N} \bmone_N^t$ so that 
\begin{equation}\label{eq:PCase13Feb221}
{{\C{I}_{{{T}}}^{(t)}(I_Nf)}} = \Fthe_N f_{0:N - 1} = \frac{T}{N} \left(\bmone_N^t f_{0:N - 1}\right).
\end{equation}
Further information about the peculiar structure, characteristics, and algorithmic construction of FIMs can be found in \cite{elgindy2019high,Elgindy2022b}.

\section{Errors and Convergence Rate for Smooth, $T$-Periodic Functions} 
\label{sec:CRFSPF1}
In this section, we study the convergence rate for smooth $T$-periodic functions and the error analysis of their truncated Fourier series, interpolation operators, and integration operators. Let $\bm{\beta} = [-\beta, \beta]\,\forall \beta > 0$, 
\[{\F{C}_{T,\beta} } = \left\{ {x + iy:x \in {\FOmega_T},y \in \bm{\beta}} \right\},\quad \forall \beta  > 0,\]
and $L^p({\FOmega_T})$ be the Banach space of measurable functions $u$ defined on ${\FOmega_T}$ such that ${\left\| u \right\|_{{L^p}}} = {\left( {{\C{I}_{\FOmega_T}}{{\left| u \right|}^p}} \right)^{1/p}} < \infty$. Let also
\[\displaystyle{{H^s}({\FOmega_T}) = \left\{ {u \in {L_{loc}}({\FOmega_T}),\;{D^\alpha }u \in {L^2}({\FOmega_T}),\left| \alpha  \right| \le s\;} \right\}},\quad \forall s \in \MBZzerP,\]
be the inner product space with the inner product $\displaystyle{{(u,v)_s}} =$\\ $\displaystyle{\sum\nolimits_{\left| \alpha  \right| \le s} {\C{I}_{{\FOmega_T}} ^{(x)}\left( {{D^\alpha }u\,{D^\alpha }v} \right)}}$, where ${{L_{loc}}({\FOmega_T} )}$ is the space of locally integrable functions on ${\FOmega_T}$ and ${{D^\alpha }u}$ denotes any derivative of $u$ with multi-index $\alpha$. Define
\[\C{H}_T^s = \left\{ {u \in {H^s}({\FOmega_T}),\;{u^{(s)}} \in {BV},\;{u^{(0:s - 1)}}(0) = {u^{(0:s - 1)}}(T)} \right\},\]
where $\displaystyle{{BV} = \left\{ {u \in {L^1}({\FOmega_T}):{{\left\| u \right\|}_{BV}} < \infty } \right\}}$ with the norm $\displaystyle{{{\left\| u \right\|}_{BV}} =}$\\ $\displaystyle{\sup \left\{ {\C{I}_T^{(x)}(u\phi '),\;\phi  \in \C{D}({\FOmega_T}),\;{{\left\| \phi  \right\|}_{{L^\infty }}} \le 1} \right\}}$ such that 
\[\C{D}({\FOmega_T}) = \left\{ {u \in {C^\infty }({\FOmega_T}):{\text{supp}}(u){\text{ is a compact subsect of }}{\FOmega_T}} \right\}.\]
Define also
\[\C{A}_{T,\beta} = \{u \in \C{H}_T^{\infty}: u\text{ is analytic in some open set containing }\F{C}_{T,\beta}\},\]
with the norm ${\left\| u \right\|_{{\C{A}_{T,\beta}}}} = {\left\| u \right\|_{{L^\infty }({{\F{C}}_{T,\beta}})}}$. For convenience of writing, we shall denote ${\left\|  \cdot  \right\|_{{L^2}({\FOmega_T})}}$ and $e^{i \omega_k x}$ by $\left\|  \cdot  \right\|$ and $\phi_k(x)\,\forall k$, respectively, and call a function $u \in \C{A}_{T,\beta}$ \textit{``a $\beta$-analytic function''} if $u$ is analytic on ${\F{C}_{T,\infty}}$ and $\displaystyle{{\lim _{\beta  \to \infty }}\frac{{{{\left\| u \right\|}_{{\C{A}_{T,\beta} }}}}}{{{e^{{\omega _\beta }}}}} = 0}$.

\begin{thm}[Decay of Fourier Series Coefficients for analytic, $T$-periodic functions]\label{thm:0}
Suppose that $f \in {{\C{A}_{T,\beta}}} \foralls \beta > 0$, is approximated by the $N/2$-degree, $T$-periodic truncated Fourier series 
\begin{equation}\label{eq:FTS1}
{\Pi _N}f(x) = \sum\limits_{\left| k \right| \le N/2} {{{\hat f}_k}{\phi_k(x)}},\quad \forall N \in \MBZeP,
\end{equation}
where $\hat f_{-N/2:N/2}$ is the Fourier series coefficients vector of $f$, then 
\begin{equation}\label{eq:fhatkMAR312021new1}
\left| {{{\hat f}_k}} \right| = O\left({e^{ - {\omega _{\left|k\right| \beta }}}}\right),\quad \text{ as }\left|k\right| \to \infty.
\end{equation}
\end{thm}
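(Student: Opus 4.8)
\emph{Proof proposal.} The plan is to derive the bound directly from the integral representation of the Fourier coefficient, combined with Cauchy's theorem and the fact that $f$ is simultaneously $T$-periodic and analytic on a horizontal strip. Recall that for $f\in\C{A}_{T,\beta}$ one has $\hat f_k=\tfrac1T\,\C{I}_T^{(x)}(f\,\phi_{-k})=\tfrac1T\int_0^T f(x)\,e^{-i\omega_k x}\,dx$ for every $k$, and that $z\mapsto f(z)\,e^{-i\omega_k z}$ is holomorphic on the open set in which $f$ is analytic, in particular on an open neighbourhood of the closed strip $\F{C}_{T,\beta}$. First I would handle $k>0$: apply Cauchy's integral theorem to $z\mapsto f(z)\,e^{-i\omega_k z}$ on the rectangle with vertices $0$, $T$, $T-i\beta$, $-i\beta$, whose boundary and interior lie inside the domain of holomorphy. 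The two vertical edges cancel, because $f(T-iy)=f(-iy)$ by $T$-periodicity and $e^{-i\omega_k(T-iy)}=e^{-i\omega_k(-iy)}$ since $\omega_kT=2\pi k$, so the edge integrals coincide with opposite orientation.

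What survives is the identity
\[
\hat f_k=\frac1T\int_0^T f(x-i\beta)\,e^{-i\omega_k(x-i\beta)}\,dx=\frac{e^{-\omega_k\beta}}{T}\int_0^T f(x-i\beta)\,e^{-i\omega_k x}\,dx ,
\]
where I used $-i\omega_k(x-i\beta)=-i\omega_k x-\omega_k\beta$. Taking absolute values and bounding $|f(x-i\beta)|\le\|f\|_{L^\infty(\F{C}_{T,\beta})}=\|f\|_{\C{A}_{T,\beta}}$ yields $|\hat f_k|\le\|f\|_{\C{A}_{T,\beta}}\,e^{-\omega_k\beta}=\|f\|_{\C{A}_{T,\beta}}\,e^{-\omega_{k\beta}}$. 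For $k<0$ the symmetric argument — shifting the contour \emph{upward} onto the segment $\{x+i\beta:x\in\FOmega_T\}$ — gives $|\hat f_k|\le\|f\|_{\C{A}_{T,\beta}}\,e^{-\omega_{|k|\beta}}$ in the same manner (the sign of $k$ flips so that the extracted real exponent stays negative), while the case $k=0$ is trivial. Combining the three cases, $|\hat f_k|\le\|f\|_{\C{A}_{T,\beta}}\,e^{-\omega_{|k|\beta}}$ for all $k$, hence $|\hat f_k|=O(e^{-\omega_{|k|\beta}})$ as $|k|\to\infty$, with the implicit constant equal to $\|f\|_{\C{A}_{T,\beta}}$.

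The only delicate point — and the step I expect to require the most care — is the rigorous justification of the contour shift. One must verify that the closed rectangle (boundary and interior) sits inside the region where $f$ is holomorphic; this is exactly what the definition of $\C{A}_{T,\beta}$ grants, since it requires analyticity on an open set \emph{containing} the closed strip $\F{C}_{T,\beta}$, so the shift to depth precisely $\beta$ is admissible (were $f$ only analytic on the open strip, one would shift to depth $\beta'<\beta$, obtain the bound with $e^{-\omega_{|k|\beta'}}$, and let $\beta'\uparrow\beta$). One must also note that $f$ is bounded on $\F{C}_{T,\beta}$, so the integral along the shifted segment converges, and that the periodicity of \emph{both} factors is what forces the vertical edges to cancel. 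The remaining manipulations — the contour parametrisation, the extraction of $e^{-\omega_{|k|\beta}}$, and the final modulus estimate — are routine.
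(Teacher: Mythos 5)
Your proposal is correct and follows essentially the same route as the paper: the paper's proof also shifts the integration contour to $x-i\beta$ for $k\ge 0$ and to $x+i\beta$ for $k<0$ (using periodicity and analyticity on $\F{C}_{T,\beta}$), extracts the factor $e^{-\omega_{|k|\beta}}$, and bounds the remaining integral by $\|f\|_{\C{A}_{T,\beta}}$. The only difference is that you spell out the Cauchy-theorem rectangle argument and the cancellation of the vertical edges, which the paper leaves implicit in its chain of equalities.
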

\begin{proof}
Notice first that the set of complex exponentials $\displaystyle{\left\{ {{\phi _k}} \right\}_{k =  - N/2}^{N/2}}$ is orthogonal on ${\FOmega_T}$ with respect to the weight function $w(x) = 1\,\forall x \in {\FOmega_T}$ such that $\left( {{\phi _n},{\phi _m}} \right) = \C{I}_T^{(x)}\left( {{\phi _n}\,\phi _m^*} \right) = T{\delta _{n,m}}$, where $\delta _{n,m}$ is the Kronecker delta function defined by 
\[{\delta _{n,m}} = \left\{ \begin{array}{l}
1,\quad n = m,\\
0,\quad n \ne m.
\end{array} \right.\]
Therefore, $\left( {{\phi _n},{\phi _n}} \right) = \C{I}_T^{(x)}\left( {{\phi _n}\,\phi _n^*} \right) = \C{I}_T^{(x)}\left( {{{\left| {{\phi _n}} \right|}^2}} \right) = {\left\| {{\phi _n}} \right\|^2} = T$. Fourier coefficients, $\hat f_k$, of $f$ can thus be determined via the
orthogonal projection $(f,\phi_k)$, which produces
\begin{align}
{\hat f_k} &= \frac{1}{T}(f,{\phi _k}) = \frac{1}{T}\C{I}_T^{(x)}\left( {f{\mkern 1mu} {\phi _{ - k}}} \right)\nonumber\\
&= \left\{ \begin{array}{l}
\frac{1}{T}\C{I}_T^{(x)}{\left(f\cancbra{x - i\beta}{\mkern 1mu} {\phi _{ - k}}\cancbra{x - i\beta}\right)}\;\forall k \ge 0,\\
\frac{1}{T}\C{I}_T^{(x)}\left({f\cancbra{x + i\beta}{\mkern 1mu} {\phi _{ - k}}\cancbra{x + i\beta}} \right)\;\forall k < 0
\end{array} \right.\nonumber\\
&= \left\{ \begin{array}{l}
\frac{{{e^{ - {\omega _{k\beta }}}}}}{T}\C{I}_T^{(x)}\left({f\cancbra{x - i\beta}{\mkern 1mu} {\phi _{ - k}}} \right)\;\forall k \ge 0,\\
\frac{{{e^{ - {\omega _{ - k\beta }}}}}}{T}\C{I}_T^{(x)}\left({f\cancbra{x + i\beta}{\mkern 1mu} {\phi _{ - k}}} \right)\;\forall k < 0.
\end{array} \right.\label{eq:19APr2021_1}
\end{align}
Therefore,
\begin{equation}\label{eq:fhatkMAR312021}
\left| {{{\hat f}_k}} \right| \le {\left\| f \right\|_{{\C{A}_{T,\beta} }}}{e^{ - {\omega _{\left|k\right| \beta }}}}\quad \forall k \in \MB{K}_N,
\end{equation}
from which the Asymptotic Formula \eqref{eq:fhatkMAR312021new1} follows.
\end{proof}

Theorem \ref{thm:0} enables us to measure the decay rate of the error in approximating an  analytic, $T$-periodic function by a truncated Fourier series.
\begin{thm}[Fourier truncation error for analytic, $T$-periodic functions]\label{thm:1}
Suppose that $f \in {{\C{A}_{T,\beta}}} \foralls \beta > 0$, is approximated by the $N/2$-degree, $T$-periodic truncated Fourier series \eqref{eq:FTS1}, then 
\begin{subequations}
\begin{equation}\label{eq:Thm1}
{\left\| {f - {\Pi _N}f} \right\|} = O\left(e^{ -{\omega _{N \beta/2}}}\right),\quad \text{ as }N \to \infty.
\end{equation}
Moreover, if $f$ is $\beta$-analytic, 
then 
\begin{equation}\label{eq:Thm2}
{\left\| {f - {\Pi _N}f} \right\|} = 0,\quad \forall N \in \MBZeP,
\end{equation}
i.e., $f \in {\Span}\left\{ {{\phi _{ - N/2:N/2}}} \right\}$.
\end{subequations}
\end{thm}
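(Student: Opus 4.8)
The plan is to deduce \eqref{eq:Thm1} from Parseval's identity combined with the coefficient bound \eqref{eq:fhatkMAR312021} of Theorem~\ref{thm:0}, and to deduce \eqref{eq:Thm2} by letting $\beta\to\infty$ in that same bound.

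For \eqref{eq:Thm1}, I would fix the $\beta>0$ furnished by the hypothesis and recall, as in the proof of Theorem~\ref{thm:0}, that $\{\phi_k\}$ is orthogonal on $\FOmega_T$ with $\|\phi_k\|^2=T$ and that $\hat f_k=\tfrac1T(f,\phi_k)$; hence $\Pi_N f$ is the $L^2$-orthogonal projection of $f$ onto $\Span\{\phi_{-N/2:N/2}\}$, and by completeness of the exponential system in $L^2(\FOmega_T)$ the remainder expands as $f-\Pi_N f=\sum_{|k|>N/2}\hat f_k\phi_k$, convergent in $L^2$. Parseval's identity then gives $\|f-\Pi_N f\|^2=T\sum_{|k|>N/2}|\hat f_k|^2$. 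Substituting $|\hat f_k|\le\|f\|_{\C{A}_{T,\beta}}\,e^{-\omega_{|k|\beta}}$ from \eqref{eq:fhatkMAR312021}, writing $q:=e^{-2\omega_\beta}\in(0,1)$ so that $e^{-2\omega_{|k|\beta}}=q^{|k|}$, and using that $N$ is even so the summation index runs over $|k|\ge N/2+1$, I collapse the tail to the geometric series $\sum_{|k|>N/2}q^{|k|}=2q^{N/2+1}/(1-q)$. This yields $\|f-\Pi_N f\|\le C\,q^{N/4}$ with $C=\sqrt{2Tq/(1-q)}\,\|f\|_{\C{A}_{T,\beta}}$ independent of $N$, and since $q^{N/4}=e^{-\omega_\beta N/2}=e^{-\omega_{N\beta/2}}$, estimate \eqref{eq:Thm1} follows.

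For \eqref{eq:Thm2}, suppose $f$ is $\beta$-analytic, so that $f\in\C{A}_{T,\beta}$ for every $\beta>0$ and hence \eqref{eq:fhatkMAR312021} holds for all $\beta>0$. Fixing any integer $k\ne0$ and using $|k|\ge1$, $\omega_\beta>0$, I get $|\hat f_k|\le\|f\|_{\C{A}_{T,\beta}}\,e^{-\omega_{|k|\beta}}\le\|f\|_{\C{A}_{T,\beta}}\,e^{-\omega_\beta}=\|f\|_{\C{A}_{T,\beta}}/e^{\omega_\beta}$; letting $\beta\to\infty$ and invoking the defining relation $\lim_{\beta\to\infty}\|f\|_{\C{A}_{T,\beta}}/e^{\omega_\beta}=0$ of a $\beta$-analytic function forces $\hat f_k=0$. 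Thus every nonzero-frequency Fourier coefficient of $f$ vanishes, whence $f=\hat f_0$ a.e.\ on $\FOmega_T$ and, by continuity, $f\equiv\hat f_0\in\Span\{\phi_0\}\subseteq\Span\{\phi_{-N/2:N/2}\}$ for every $N\in\MBZeP$; therefore $\Pi_N f=f$, which is \eqref{eq:Thm2}.

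The computation is essentially routine. The points needing care are the identification of the remainder with the high-frequency tail of the Fourier series (completeness in $L^2(\FOmega_T)$), the parity bookkeeping that pins the geometric tail to start at $|k|=N/2+1$, and the elementary identity $\omega_\beta N/2=\omega_{N\beta/2}$ that converts the geometric rate $q^{N/4}$ into the stated exponential-in-$N$ rate; for the second part, the key is simply that $\beta$-analyticity makes \eqref{eq:fhatkMAR312021} available with $\beta$ arbitrarily large, so that no nonzero frequency can survive. I do not anticipate a substantive obstacle beyond keeping these constants and indices straight.
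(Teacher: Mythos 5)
Your proof of \eqref{eq:Thm1} is essentially the paper's own argument: the same orthogonality/Parseval identity $\left\| {f - {\Pi _N}f} \right\|^2 = T\sum_{\left| k \right| > N/2} |\hat f_k|^2$ (the paper's \eqref{eq:thm1ineqfor01Apr20212}), the same insertion of the coefficient bound \eqref{eq:fhatkMAR312021}, and the same geometric-series tail, giving exactly the paper's constant $\sqrt{2T/(e^{2\omega_\beta}-1)}\,\|f\|_{\C{A}_{T,\beta}}$ in \eqref{eq:thm1ineqfor01Apr20211}; your bookkeeping with $q=e^{-2\omega_\beta}$ and the identity $\omega_\beta N/2=\omega_{N\beta/2}$ is correct. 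For \eqref{eq:Thm2} you take a slightly different route: the paper simply lets $\beta\to\infty$ in the aggregate bound \eqref{eq:thm1ineqfor01Apr20211}, using that the prefactor behaves like $\sqrt{2T}\,\|f\|_{\C{A}_{T,\beta}}e^{-\omega_\beta}$ and the $\beta$-analyticity limit kills it, whereas you let $\beta\to\infty$ in the individual coefficient bound to conclude $\hat f_k=0$ for every $k\neq 0$. Both are valid given the stated definition, and yours is arguably cleaner; note, however, that your version proves the strictly stronger statement that a $\beta$-analytic $f$ is constant ($f\in\Span\{\phi_0\}$), which exposes that the paper's literal definition of $\beta$-analyticity is more restrictive than its later usage suggests (e.g., $f_1(t)=2\sin(3t-1)+1$ is treated as $\beta$-analytic yet is not constant, and the intended conclusion is only $f\in\Span\{\phi_{-N/2:N/2}\}$). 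That is a quirk of the paper's definition rather than a gap in your reasoning, but it is worth being aware that your conclusion, while correct under the definition as written, is stronger than what the theorem (and the paper's subsequent examples) intend.
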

\begin{proof}
Observe first that
\begin{align}
&{\left\| {f - {\Pi _N}f} \right\|^2} = \C{I}_T^{(x)}\left( {\sum\limits_{\left| k \right| > N/2} {{{\hat f}_k}{\phi _k}} \sum\limits_{\left| l \right| > N/2} {\hat f_l^*{\phi _{ - l}}} } \right)\nonumber\\
&= \sum\limits_{\left| k \right| > N/2} {\sum\limits_{\left| l \right| > N/2} {{{\hat f}_k}\hat f_l^*} } \C{I}_T^{(x)}{{\phi _{k - l}}} = \sum\limits_{\left| k \right| > N/2} {\sum\limits_{\left| l \right| > N/2} {{{\hat f}_k}\hat f_l^*} ({\phi _k},{\phi _l})}\nonumber\\
&= T\sum\limits_{\left| k \right| > N/2} {{{\left| {{{\hat f}_k}} \right|}^2}}.\label{eq:thm1ineqfor01Apr20212}
\end{align}
The above equation together with Ineq. \eqref{eq:fhatkMAR312021} yield
\begin{align}
&{\left\| {f - {\Pi _N}f} \right\|^2} \le T\left\| f \right\|_{{\C{A}_{T,\beta}  }}^2\sum\limits_{\left| k \right| > N/2} {{e^{ - 2{\omega _{\left| k \right|\beta }}}}} = \frac{{2T\left\| f \right\|_{{\C{A}_{T,\beta}  }}^2{e^{ - {\omega _{N\beta }}}}}}{{{e^{2{\omega _\beta }}} - 1}}\\
&\Rightarrow \left\| {f - {\Pi _N}f} \right\| \le \sqrt {\frac{{2T}}{{{e^{2{\omega _\beta }}} - 1}}} {\left\| f \right\|_{{\C{A}_{T,\beta}  }}}{e^{ - {\omega _{N\beta/2}}}},\label{eq:thm1ineqfor01Apr20211}
\end{align}
from which Formulas \eqref{eq:Thm1} and \eqref{eq:Thm2} are derived.
\end{proof}

FPSI methods often introduce aliasing errors. The following theorem shows that the aliasing error, for analytic, $T$-periodic functions, decays faster than $N^{-s} \foralla s \in \MBZ^+$; in fact, the aliasing error has the same order of convergence of Fourier truncation error.

\begin{thm}[Fourier aliasing error for analytic, $T$-periodic functions]\label{thm:2}
Suppose that $f \in {{\C{A}_{T,\beta}}} \foralls \beta > 0$, then 
\begin{subequations}
\begin{equation}\label{eq:FTS1AE1}
\left\| {{E_Nf}} \right\| = O\left( {{e^{ - {\omega _{N\beta/2}}}}} \right),\quad \text{as }N \to \infty,
\end{equation}
where ${E_Nf}(x) = \left( {{\Pi _N}f - {I_N}f} \right)(x)$ is the aliasing error in approximating $f$ by the $T$-periodic Fourier interpolant $I_Nf,\,\forall N \in \MBZeP$. Moreover, if $f$ is analytic on ${\F{C}_{T,\infty}}$, 
then 
\begin{equation}\label{eq:Thm2AE2}
\left\| {{E_Nf}} \right\| = 0,\quad \forall N \in \MBZeP.
\end{equation}
\end{subequations}
\end{thm}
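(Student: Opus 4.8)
The plan is to mimic the proof of Theorem~\ref{thm:1}: use orthogonality on $\FOmega_T$ to turn $\|E_N f\|$ into an $\ell^2$ sum of Fourier‑type coefficients, identify those coefficients explicitly through the classical aliasing relation between the discrete coefficients $\tilde f_k$ and the exact ones $\hat f_k$, and then control them with the exponential decay estimate~\eqref{eq:fhatkMAR312021} of Theorem~\ref{thm:0}.

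First I would derive the aliasing identity. Since $f\in\C{A}_{T,\beta}$, estimate~\eqref{eq:fhatkMAR312021} makes $\{\hat f_k\}$ absolutely summable, so the Fourier series of $f$ converges uniformly and $f_j=\sum_{l\in\MBZ}\hat f_l\,\phi_l(t_j)$ for every $j\in\MBJ_N$. Substituting this into $\tilde f_k=\frac1N\sum_{j=0}^{N-1}f_j\,e^{-i\omega_k t_j}$, interchanging the (absolutely convergent) sums, and invoking the discrete orthogonality $\frac1N\sum_{j=0}^{N-1}e^{2\pi i(l-k)j/N}=1$ when $N\mid(l-k)$ and $0$ otherwise, gives $\tilde f_k=\sum_{m\in\MBZ}\hat f_{k+mN}$. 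Combining this with~\eqref{eq:FI1nn1} and~\eqref{eq:FTS1} in $E_N f=\Pi_N f-I_N f$ and using $(\phi_n,\phi_m)=T\delta_{n,m}$ on $\FOmega_T$ (note that $\phi_{N/2}$ and $\phi_{-N/2}$, although indistinguishable on $\MBS_N$, are distinct and orthogonal on the continuum) yields
\[
\|E_N f\|^2=T\!\!\sum_{|k|\le N/2}\!\!|a_k|^2,\qquad a_k=-\!\!\sum_{\substack{m\in\MBZ\\ m\ne0}}\!\!\hat f_{k+mN}\quad(|k|<N/2),
\]
where the two endpoint modes $k=\pm N/2$—at which the primed sum in~\eqref{eq:FI1nn1} retains only one of the two degree‑$N/2$ exponentials, unlike $\Pi_N f$—are handled separately; each of their coefficients is again a sum of exact coefficients $\hat f_l$ with $|l|\ge N/2$.

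Next I would estimate. By~\eqref{eq:fhatkMAR312021}, $|\hat f_{k+mN}|\le\|f\|_{\C{A}_{T,\beta}}e^{-\omega_{|k+mN|\beta}}$, and $|k+mN|\ge(|m|-\tfrac12)N$ whenever $|k|\le N/2$ and $m\ne0$. To keep the final bound free of any algebraic prefactor, I would apply Cauchy--Schwarz in the $m$‑sum, $|a_k|^2\le\big(\sum_{m\ne0}e^{-\omega_{|k+mN|\beta}}\big)\big(\sum_{m\ne0}e^{\omega_{|k+mN|\beta}}|\hat f_{k+mN}|^2\big)$, bound the first factor by the geometric series $2e^{-\omega_{N\beta/2}}/(1-e^{-\omega_{N\beta}})$, bound $e^{\omega_{|l|\beta}}|\hat f_l|^2\le\|f\|_{\C{A}_{T,\beta}}^2e^{-\omega_{|l|\beta}}$ in the second, and sum over $|k|\le N/2$, observing that each index $l$ with $|l|\ge N/2$ is produced by at most two pairs $(k,m)$. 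This gives $\|E_N f\|^2=O(e^{-\omega_{N\beta}})$, hence~\eqref{eq:FTS1AE1}, with a constant that stays bounded as $\beta\to\infty$. For the second assertion, the bound just obtained holds for \emph{every} $\beta>0$, and letting $\beta\to\infty$—exactly as~\eqref{eq:Thm2} is deduced from~\eqref{eq:thm1ineqfor01Apr20211}—forces $\|E_N f\|=0$ for all $N\in\MBZeP$ (equivalently, $f$ reduces to a low‑degree trigonometric polynomial that both $\Pi_N$ and $I_N$ reproduce exactly).

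There is no deep obstacle here; the two delicate points are the bookkeeping at the endpoint modes $k=\pm N/2$—the interpolant~\eqref{eq:FI1nn1} is built from the primed sum, so its degree‑$N/2$ part is asymmetric and the corresponding coefficients of $E_N f$ are \emph{not} simply $-\sum_{m\ne0}\hat f_{k+mN}$, although they remain of the same exponential order—and arranging the double sum, via the Cauchy--Schwarz step above, so that the estimate carries no spurious power of $N$, matching the clean form of~\eqref{eq:FTS1AE1}.
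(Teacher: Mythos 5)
Your proposal follows essentially the same route as the paper's proof: derive the aliasing identity $\tilde f_k=\hat f_k+\sum_{p\neq 0}\hat f_{k+pN}$ from discrete orthogonality, use continuum orthogonality of the $\phi_k$ to reduce $\left\|E_Nf\right\|^2$ to a sum of squared aliased coefficients with the degree-$N/2$ endpoint mode handled separately (the paper does this by subtracting $\hat f_{N/2}\phi_{N/2}$ and applying the triangle inequality), invoke the decay bound of Theorem \ref{thm:0}, and let $\beta\to\infty$ for the vanishing statement. The only difference is in the middle estimate, where you use a weighted Cauchy--Schwarz in the $m$-sum plus a multiplicity count while the paper bounds the squared aliased sums term-wise and evaluates the resulting double sum explicitly via geometric series and hyperbolic identities, arriving at the explicit constant $\sqrt{T}\left(1+\sqrt{\coth(\omega_\beta)}\right)$; both yield the same $O\left(e^{-\omega_{N\beta/2}}\right)$ rate.
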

\begin{proof}
Replacing $f$ in \eqref{DFP1} by its Fourier series yields
\begin{align}
&{\tilde f_k} = \frac{1}{N}\sum\limits_{j = 0}^{N - 1} {\left[ {\sum\limits_{l \in \MBZ} {{{\hat f}_l}\,{\phi _l}({x_j})} } \right]{\phi _{ - k}}({x_j})}  = \sum\limits_{l \in \MBZ} {{{\hat f}_l}\left[ {\frac{1}{N}\sum\limits_{j = 0}^{N - 1} {{\phi _{l - k}}({x_j})} } \right]}\nonumber\\
&= {\left[ {\sum\limits_{l \in \MBZ} {{{\hat f}_l}{\delta _{l - k,pN}}} } \right]_{\left| p \right| \in \MBZzer^ + }} = {\hat f_k} + \sum\limits_{p \in \canczer{\MBZ}} {{{\hat f}_{k + pN}}} ,\quad \forall k \in {\MB{K}'_N}.\label{eq:ftildekMAR312021}
\end{align}
Formulas \eqref{eq:fhatkMAR312021}, \eqref{eq:ftildekMAR312021}, and the Triangle Difference Ineq. imply that
\begin{align}
&{\left| {\left\| {{E_Nf}} \right\| - \left\| {{{\hat f}_{N/2}}{\phi _{N/2}}} \right\|} \right|^2} \le {\left\| {{E_Nf} - {{\hat f}_{N/2}}{\phi _{N/2}}} \right\|^2}\nonumber\\
&= \C{I}_T^{(x)}\left( {\sumd\sum\limits_{\left| k \right| \le N/2} {\sum\limits_{p \in \canczer{\MBZ}} {{{\hat f}_{k + pN}}{\phi _k}} }  \cdot \sumd\sum\limits_{\left| l \right| \le N/2} {\sum\limits_{p \in \canczer{\MBZ}} {\hat f_{l + pN}^*{\phi _{ - l}}} } } \right)\nonumber\\
\end{align}
\begin{align}
&= \sumd\sum\limits_{\left| k \right| \le N/2} {\sumd\sum\limits_{\left| l \right| \le N/2} {\sum\limits_{p \in \canczer{\MBZ}} {{{\hat f}_{k + pN}}\sum\limits_{p \in \canczer{\MBZ}} {\hat f_{l + pN}^*} } } } \C{I}_T^{(x)}{\phi _{k - l}} = \sumd\sum\limits_{\left| k \right| \le N/2} {{{\left| {\sum\limits_{p \in \canczer{\MBZ}} {{{\hat f}_{k + pN}}} } \right|}^2}{{\left\| {{\phi _k}} \right\|}^2}}\nonumber\\
&= T\sumd\sum\limits_{\left| k \right| \le N/2} {{{\left| {\sum\limits_{p \in \canczer{\MBZ}} {{{\hat f}_{k + pN}}} } \right|}^2}} \le T\sumd\sum\limits_{\left| k \right| \le N/2} {\sum\limits_{p \in \canczer{\MBZ}} {{{\left| {{{\hat f}_{k + pN}}} \right|}^2}} }\nonumber\\ &\le T\left\| f \right\|_{{\C{A}_{T,\beta} }}^2\sumd\sum\limits_{\left| k \right| \le N/2} {\sum\limits_{p \in \canczer{\MBZ}} {{e^{ - 2{\omega _{\left| {k + pN} \right|\beta }}}}} }\label{eq:proof1}\\
&= T\left\| f \right\|_{{\C{A}_{T,\beta} }}^2\left[ 2 {\sum\limits_{k = 0}^{N/2} {{\sum\limits_{p \in \canczer{\MBZ}} {{e^{ - 2{\omega _{\left| {k + pN} \right|\beta }}}}} }}  - \sum\limits_{p \in \canczer{\MBZ}} \left({{e^{ - 2{\omega _{\left| {pN} \right|\beta }}}}} + {{e^{ - 2{\omega _{\left| {N/2 + pN} \right|\beta }}}}}\right) } \right]\nonumber\\
&= T\left\| f \right\|_{{\C{A}_{T,\beta} }}^2\left[ 2 {\sum\limits_{k = 0}^{N/2} \left(e^{-2 \omega_{k \beta}}{{\sum\limits_{p \ge 1} {{e^{ - 2{\omega _{pN\beta}}}}}} + {e^{-2 \omega_{-k \beta}}\sum\limits_{p \ge 1} {{e^{ - 2{\omega _{pN\beta}}}}}}}\right)}\right.\nonumber\\
&\left.\text{}\qquad - 2\left(1+\cosh (\omega_{N \beta})\right)\sum\limits_{p \ge 1} {{e^{ - 2{\omega _{pN\beta }}}}} \right]\nonumber\\
&= T\left\| f \right\|_{{\C{A}_{T,\beta} }}^2\left[ {4\sum\limits_{k = 0}^{N/2} {\left( {\cosh (2{\omega _{k\beta }})\sum\limits_{p \ge 1} {{e^{ - 2{\omega _{pN\beta }}}}} } \right) - {e^{ - {\omega _{N\beta }}}}\coth ({\omega _{N\beta/2}})} } \right]\nonumber
\end{align}
\begin{align}
&= T\left\| f \right\|_{{\C{A}_{T,\beta} }}^2\left[ {\frac{4}{{{e^{2{\omega _{N\beta }}}} - 1}}\sum\limits_{k = 0}^{N/2} {\cosh (2{\omega _{k\beta }}) - {e^{ - {\omega _{N\beta }}}}\coth ({\omega _{N\beta/2}})} } \right]\nonumber\\
&= T \coth ({\omega _\beta }) \left\| f \right\|_{{\C{A}_{T,\beta} }}^2{e^{ - {\omega _{N\beta }}}}.\nonumber\\
&\Rightarrow \left\| {{E_Nf} - {{\hat f}_{N/2}}{\phi _{N/2}}} \right\| \le \sqrt {T\coth ({\omega _\beta })}\,{\left\| f \right\|_{{\C{A}_{T,\beta} }}}{e^{ - {\omega _{N\beta/2}}}}.\nonumber
\end{align}
Since $\displaystyle{\left\| {{{\hat f}_{N/2}}{\phi _{N/2}}} \right\| = \sqrt T \left| {{{\hat f}_{N/2}}} \right| = \sqrt T\,{\left\| f \right\|_{{\C{A}_{T,\beta} }}}{e^{ - {\omega _{N\beta/2}}}}}$ by Eq. \eqref{eq:fhatkMAR312021}, then 
\begin{align}
\left\| {{E_Nf}} \right\| &\le \left\| {{{\hat f}_{N/2}}{\phi _{N/2}}} \right\| + \left\| {{E_Nf} - {{\hat f}_{N/2}}{\phi _{N/2}}} \right\|\nonumber\\
&\le \sqrt T \left( {1 + \sqrt {\coth ({\omega _\beta })} } \right) {\left\| f \right\|_{{\C{A}_{T,\beta} }}}{e^{ - {\omega _{N\beta/2}}}},\label{eq:thm2ineqfor01Apr20211}
\end{align}
from which Formulas \eqref{eq:FTS1AE1} and \eqref{eq:Thm2AE2} are derived.
\end{proof}

Now we are ready to state the following two main results of this section: the Fourier interpolation and quadrature errors for analytic, $T$-periodic functions.

\begin{cor}[Fourier interpolation error for analytic, $T$-periodic functions]\label{cor:1}
Let $N \in \MBZeP$ and suppose that $f \in {{\C{A}_{T,\beta}}} \foralls \beta > 0$, then 
\begin{subequations}
\begin{equation}\label{eq:FTS1AE1hi1}
\left\| f - {{I_Nf}} \right\| = O\left( {{e^{ - {\omega _{N\beta/2}}}}} \right),\quad \text{as }N \to \infty.
\end{equation}
Moreover, if $f$ is $\beta$-analytic, 
then 
\begin{equation}\label{eq:Thm2hi2}
{\left\| {f - {I _N}f} \right\|} = 0.
\end{equation}
\end{subequations}
\end{cor}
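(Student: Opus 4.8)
The plan is to derive Corollary~\ref{cor:1} immediately from Theorems~\ref{thm:1} and~\ref{thm:2} by splitting the interpolation error through the $N/2$-degree truncated Fourier projection $\Pi_N f$. First I would write, using the definition $E_N f = \Pi_N f - I_N f$ introduced in Theorem~\ref{thm:2},
\[
f - I_N f = \left(f - \Pi_N f\right) + \left(\Pi_N f - I_N f\right) = \left(f - \Pi_N f\right) + E_N f,
\]
then take the $L^2({\FOmega_T})$-norm and apply the triangle inequality to get $\| f - I_N f \| \le \| f - \Pi_N f \| + \| E_N f \|$.

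Next I would bound the first summand by Theorem~\ref{thm:1} and the second by Theorem~\ref{thm:2}: the explicit inequalities established at the ends of their respective proofs show that each of these two terms is at most $C\,{\left\| f \right\|_{\C{A}_{T,\beta}}}\,e^{-\omega_{N\beta/2}}$ for some constant $C = C(T,\beta)$. Adding the two estimates produces a bound of the same form, and hence $\| f - I_N f \| = O\left(e^{-\omega_{N\beta/2}}\right)$ as $N \to \infty$, which is~\eqref{eq:FTS1AE1hi1}. For the second assertion, a $\beta$-analytic $f$ is by definition analytic on ${\F{C}_{T,\infty}}$, so the vanishing statement~\eqref{eq:Thm2} of Theorem~\ref{thm:1} gives $\| f - \Pi_N f \| = 0$ and the vanishing statement~\eqref{eq:Thm2AE2} of Theorem~\ref{thm:2} gives $\| E_N f \| = 0$; the splitting above then forces $\| f - I_N f \| = 0$, i.e.~\eqref{eq:Thm2hi2}.

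I do not expect any real obstacle here; the corollary is essentially bookkeeping on top of the two preceding theorems. The only point that warrants a moment's care is that the two ``vanishing'' sub-statements are phrased under formally different hypotheses --- $\beta$-analyticity in Theorem~\ref{thm:1} and analyticity on ${\F{C}_{T,\infty}}$ in Theorem~\ref{thm:2} --- so one must observe that the former implies the latter in order to invoke both simultaneously under the single hypothesis of Corollary~\ref{cor:1}. Equivalently, once $\| E_N f \| = 0$ and $f \in \Span\{\phi_{-N/2:N/2}\}$ (the content of~\eqref{eq:Thm2}) are in hand, one can conclude $I_N f = \Pi_N f = f$ directly, since the Fourier interpolation operator reproduces every element of $\Span\{\phi_{-N/2:N/2}\}$ exactly.
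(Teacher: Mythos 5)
Your argument is correct and follows the paper's overall strategy---splitting the interpolation error through the truncated series $\Pi_N f$ and then invoking the explicit bounds established in the proofs of Theorems~\ref{thm:1} and~\ref{thm:2}---but it combines the two pieces differently. The paper does not use the triangle inequality: it uses the $L^2$-orthogonality of the truncation error $f-\Pi_N f$ (supported on modes $|k|>N/2$) and the aliasing error $E_N f$ (supported on modes $|k|\le N/2$), so that the Pythagorean identity $\left\|f-I_Nf\right\|^2=\left\|f-\Pi_Nf\right\|^2+\left\|E_Nf\right\|^2$ holds, which yields the explicit constant $\mu_{T,\beta}=\sqrt{2T\left(\sqrt{\coth(\omega_\beta)}+\coth(\omega_\beta)\right)}$ in the bound $\left\|f-I_Nf\right\|\le\mu_{T,\beta}\left\|f\right\|_{\C{A}_{T,\beta}}e^{-\omega_{N\beta/2}}$. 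Your triangle-inequality version gives the same $O\left(e^{-\omega_{N\beta/2}}\right)$ rate, which is all the corollary asserts, but with a slightly larger constant; since the paper reuses $\mu_{T,\beta}$ verbatim in the proof of Corollary~\ref{cor:2} and in the FPSQ-ATP bound \eqref{eq:Uppbnd1}, the orthogonality step buys the sharper, later-quoted constant, at the (small) cost of justifying the orthogonality. Your treatment of the vanishing case is fine---$\beta$-analyticity includes analyticity on $\F{C}_{T,\infty}$ by the paper's definition, so both vanishing statements apply simultaneously, exactly as the paper does---though your closing aside is slightly imprecise: because the interpolant omits the $k=N/2$ term (with $\tilde f_{N/2}=\tilde f_{-N/2}$), $I_N$ maps $\phi_{N/2}$ to $\phi_{-N/2}$ and hence does not reproduce all of $\Span\left\{\phi_{-N/2:N/2}\right\}$; this does not affect your proof, since $\left\|E_Nf\right\|=0$ together with $\left\|f-\Pi_Nf\right\|=0$ already gives \eqref{eq:Thm2hi2} via your decomposition.
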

\begin{proof}
The proof is established from the fact that the aliasing error is orthogonal to the truncation error $f - \Pi_N f$ in the $L^2$ sense:
\begin{align*}
&{\left\| {f - {I_N}f} \right\|^2} = {\left\| {f - {\Pi _N}f} \right\|^2} + {\left\| {{E_N}f} \right\|^2}
\end{align*}
\begin{align*}
&\le T\left[ {\frac{2}{{{e^{2{\omega _\beta }}} - 1}} + {{\left( {\sqrt {\coth ({\omega _\beta })}  + 1} \right)}^2}} \right]\left\| f \right\|_{{\C{A}_{T,\beta }}}^2{e^{ - {\omega _{N\beta }}}}\\
&\Rightarrow \left\| {f - {I_N}f} \right\| \le {\mu _{T,\beta }} \left\| f \right\|_{{\C{A}_{T,\beta }}} {e^{ - {\omega _{N\beta/2}}}},
\end{align*}
where $\displaystyle{{\mu _{T,\beta }} = \sqrt {2T\left( {\sqrt {\coth ({\omega _\beta })}  + \coth ({\omega _\beta })} \right)}}$. The upper bound collapses to zero if $f$ is $\beta$-analytic 
as proved by Theorems \ref{thm:1} and \ref{thm:2}.
\end{proof}

We conclude this section with some useful convergence results on the error of Fourier PS quadrature (FPSQ) constructed through the FPSI matrix.
\begin{cor}[FPSQ error for analytic, $T$-periodic functions]\label{cor:2}
Let $N \in \MBZeP$ and suppose that $f \in {{\C{A}_{T,\beta}}} \foralls \beta > 0$, then 
\begin{subequations}
\begin{equation}\label{eq:FPSQEFATPF1}
\left| {\C{I}_{{{\bm{x}_N}}}^{(x)}f - \Fthe f_{0:N - 1}} \right| = O\left( {{e^{ - {\omega _{N\beta/2}}}}\bmone_N} \right),\quad {\text{as }}N \to \infty.
\end{equation}
Moreover, if $f$ is $\beta$-analytic, 
then 
\begin{equation}\label{eq:FPSQEFATPF2}
\left| {\C{I}_{{{\bm{x}_N}}}^{(x)}f - \Fthe f_{0:N - 1}} \right| = \bmzer_N.
\end{equation}
\end{subequations}
\end{cor}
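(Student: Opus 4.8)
The plan is to recognize the quadrature vector $\Fthe\, f_{0:N-1}$ as the \emph{exact} integral of the trigonometric interpolant $I_N f$ at the collocation abscissae, so that the error reduces to integrals of $f - I_N f$, and then to dominate each of those integrals by the $L^2$-error that Corollary \ref{cor:1} already controls. First I would recall the matrix form of \eqref{eq:AppRedFIM1} (with the dummy variable renamed from $t$ to $x$), namely $\C{I}_{\bmx_N}^{(x)}(I_N f) = \Fthe\, f_{0:N-1}$. Reading this componentwise, the $l$-th entry of the error vector is
\[
\C{I}_{x_l}^{(x)} f - \left(\Fthe\, f_{0:N-1}\right)_l = \C{I}_{x_l}^{(x)}\left(f - I_N f\right) = \int_0^{x_l} \left(f - I_N f\right)(x)\,dx,\qquad \forall l \in \MBJ_N.
\]

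Next I would bound each such entry uniformly in $l$. Since $0 \le x_l \le T$, the interval $\FOmega_{x_l}$ is contained in $\FOmega_T$, so $\bigl|\C{I}_{x_l}^{(x)}(f - I_N f)\bigr| \le \int_0^T \left|f - I_N f\right| = {\left\| f - I_N f\right\|}_{L^1(\FOmega_T)}$, and a single application of the Cauchy--Schwarz inequality on $\FOmega_T$ gives ${\left\| f - I_N f\right\|}_{L^1(\FOmega_T)} \le \sqrt{T}\,{\left\| f - I_N f\right\|}$. The key feature is that the resulting bound is independent of $l$.

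Finally I would invoke Corollary \ref{cor:1}, which yields ${\left\| f - I_N f\right\|} \le \mu_{T,\beta}\,{\left\| f\right\|}_{\C{A}_{T,\beta}}\, e^{-\omega_{N\beta/2}}$; hence every component of the error vector is bounded by $\sqrt{T}\,\mu_{T,\beta}\,{\left\| f\right\|}_{\C{A}_{T,\beta}}\, e^{-\omega_{N\beta/2}}$, and reassembling this as a vector inequality gives $\bigl|\C{I}_{\bmx_N}^{(x)} f - \Fthe\, f_{0:N-1}\bigr| \le \sqrt{T}\,\mu_{T,\beta}\,{\left\| f\right\|}_{\C{A}_{T,\beta}}\, e^{-\omega_{N\beta/2}}\,\bmone_N$, which is \eqref{eq:FPSQEFATPF1}. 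If $f$ is $\beta$-analytic, the same corollary gives ${\left\| f - I_N f\right\|} = 0$, so the error vector is $\bmzer_N$, establishing \eqref{eq:FPSQEFATPF2}. The only point requiring any care — and therefore the ``main obstacle'', modest as it is — is making the per-component estimate uniform in $l$ so that it can be factored through $\bmone_N$; dominating by the $L^1$-norm over the full period $\FOmega_T$ rather than over the shorter interval $\FOmega_{x_l}$ settles this, and no analytic machinery beyond Corollary \ref{cor:1} is needed.
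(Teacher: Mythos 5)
Your proposal is correct and follows essentially the same route as the paper: identify $\Fthe\, f_{0:N-1}$ with the exact integrals of $I_N f$ at the nodes, reduce the error to $\C{I}_{x_l}^{(x)}(f-I_Nf)$, apply Cauchy--Schwarz, and invoke Corollary \ref{cor:1}. The only (immaterial) difference is that the paper applies Schwarz's inequality on each $\FOmega_{x_l}$, yielding the slightly sharper componentwise factor $\sqrt{x_l}$ (i.e.\ the vector $\sqrt{\bm{x}_N}$) in place of your uniform $\sqrt{T}$, which still gives \eqref{eq:FPSQEFATPF1} and \eqref{eq:FPSQEFATPF2}.
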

\begin{proof}
The proof is established through Corollary \ref{cor:1}, the Triangle Ineq., and Schwarz's Ineq. by realizing that
\begin{align}
\left| {\C{I}_{{{\bm{x}_N}}}^{(x)}f - \Fthe f_{0:N - 1}} \right| &= \left| {\C{I}_{{{\bm{x}_N}}}^{(x)}f - \C{I}_{{{\bm{x}_N}}}^{(x)}(I_Nf) + \C{I}_{{{\bm{x}_N}}}^{(x)}(I_Nf) - \Fthe f_{0:N - 1}} \right|\nonumber\\
&\le \left| {\C{I}_{{{\bm{x}_N}}}^{(x)}f - \C{I}_{{{\bm{x}_N}}}^{(x)}(I_Nf)}\right| + \left|{\C{I}_{{{\bm{x}_N}}}^{(x)}(I_Nf) - \Fthe f_{0:N - 1}} \right|\label{eq:pointhere1}\\[-1em]
&\le \left\| f - {{I_Nf}} \right\| \sqrt{\bm{x}_N} \le \mu_{T,\beta} \left\| f \right\|_{{\C{A}_{T,\beta }}} {e^{ - {\omega _{N\beta/2}}}}\,\sqrt{\bm{x}_N},\\[-2.5em]
\end{align}
which decay to zero if $f$ is $\beta$-analytic. 
\end{proof}
Corollary \ref{cor:2} shows that
\begin{equation}\label{eq:Uppbnd1}
\left\| {\C{I}_{{{\bm{x}_N}}}^{(x)}f - \Fthe f_{0:N - 1}} \right\|_2 \le \mu_{T,\beta} \left\| f \right\|_{{\C{A}_{T,\beta }}} {e^{ - {\omega _{N\beta/2}}}}\,\left\|\sqrt{\bm{x}_N}\right\|_2,
\end{equation}
$\forall N \in \MBZeP, f \in {{\C{A}_{T,\beta}}}: \beta > 0$, where $\left\|\cdot\right\|_2$ is the Euclidean norm for vectors. We refer to the upper bound \eqref{eq:Uppbnd1} by the \textit{``FPSQ error upper bound for analytic, $T$-periodic functions,''} or shortly by the \textit{``FPSQ-ATP error upper bound.''} Notice that the factor $\mu_{T,\beta}$ is a monotonically decreasing function for increasing values of $\beta$, since 
\[{\mu'_{T,\beta }} = - \frac{{\pi \left( {2\sqrt {\coth \left( {{\omega _\beta }} \right)}  + 1} \right) \csch^2\left( {{\omega _\beta }} \right)}}{{\sqrt 2 \sqrt {\coth \left( {{\omega _\beta }} \right)} \sqrt {T\left( {\coth \left( {{\omega _\beta }} \right) + \sqrt {\coth \left( {{\omega _\beta }} \right)} } \right)} }} < 0,\]
$\forall \beta  > 0$ with ${\lim _{\beta  \to \infty }}{\mu _{T,\beta }} = 2\sqrt T$. In fact, ${\mu _{T,\beta }} \to 2\sqrt T$ exponentially fast, since $\coth(\omega_{\beta})$ exponentially converge to $1$ as $\beta \to \infty$. Therefore, the larger the $\beta$-value, the much faster the error convergence rate, as both factors ${\mu _{T,\beta }}$ and ${e^{ - {\omega _{N\beta/2}}}}$ decay exponentially fast with $\beta$. We therefore say that Fourier quadrature approximation has \textit{``an infinite order accuracy.''}

Figure \ref{fig:FAccuracy1} shows the infinity- and Euclidean- error norms in the FPSQ approximation of the three functions ${f_1}(t) =$\\ $\displaystyle{2\sin (3t - 1) + 1 \in \C{A}_{2 \pi/3,\infty}}$, $\displaystyle{{f_2}(t) = \frac{1}{{2 - \cos t}} \in \C{A}_{2 \pi,{\left[\text{Im}\left( {{{\cos }^{ - 1}}(2)} \right)\right]^ - }}}$, and $\displaystyle{{f_3}(t) = \frac{1}{{{{\sin }^2}t + 16}} \in \C{A}_{\pi,{\left[ {{{\sinh }^{ - 1}}(4)} \right]^ - }}}$, for $N = 10:10:100$, where Im$(z)$ denotes the imaginary part of a complex number $z$ and $a^-$ means a number sufficiently close to $a$ and less than $a \foralla a \in \MBR$. Since $f_1$ is $\beta$-analytic, Corollary \ref{cor:2} indicates that the Euclidean-error of the FPSQ is zero using exact-arithmetic. However, in double-precision arithmetic, where numbers are represented in 15 significant figures, we would expect the errors to immediately plateau at an $O\left(10^{-15}\right)$ level and cease to decrease but instead bounce around randomly, as seen in the lower left plot of the figure. This plateau is the result of the accumulation of round-off error in the FPSQ computation. Observe also that the present algorithm produces generally smaller errors compared to \cite[Algorithm 3.1]{elgindy2019high} due to the slight reduction in computational cost as argued in \cite{Elgindy2022b}. On the other hand, the function $f_2$ is analytic on ${\FOmega_{2 \pi}}$ but not entire, since it has poles at $2 \pi n \pm \cos^{-1}(2)\,\forall n \in \MBZ$ in the complex plane. Also, the function $f_3$ is analytic on ${\FOmega_{\pi}}$ but not entire, since it has poles at $2 \pi n \pm i \sinh^{-1}(4)\,\forall n \in \MBZ$. Therefore, $f_3$ has a radius of analyticity that is larger than that of $f_2$ by about $60\%$. Combining this data with the fact that the FPSQ-ATP error upper bound decays exponentially fast as $\beta$ grows larger, we foresee that the quadrature errors of $f_3$ will collapse and reach the plateau level much faster than the quadrature errors of $f_2$, for increasing values of $N$. This is clearly observed in the figure where the errors rapidly decay for $f_3$ and plateaus very early near $N = 20$ compared with the quadrature errors associated with $f_2$ which cease to decrease at around $N = 50$. Notice in both cases that the FPSQ-ATP error upper bound is larger than the observed quadrature errors until the plateau is reached, in agreement with the theoretical results of Corollary \ref{cor:2}.

\begin{figure*}
\centering
\includegraphics[scale=0.35]{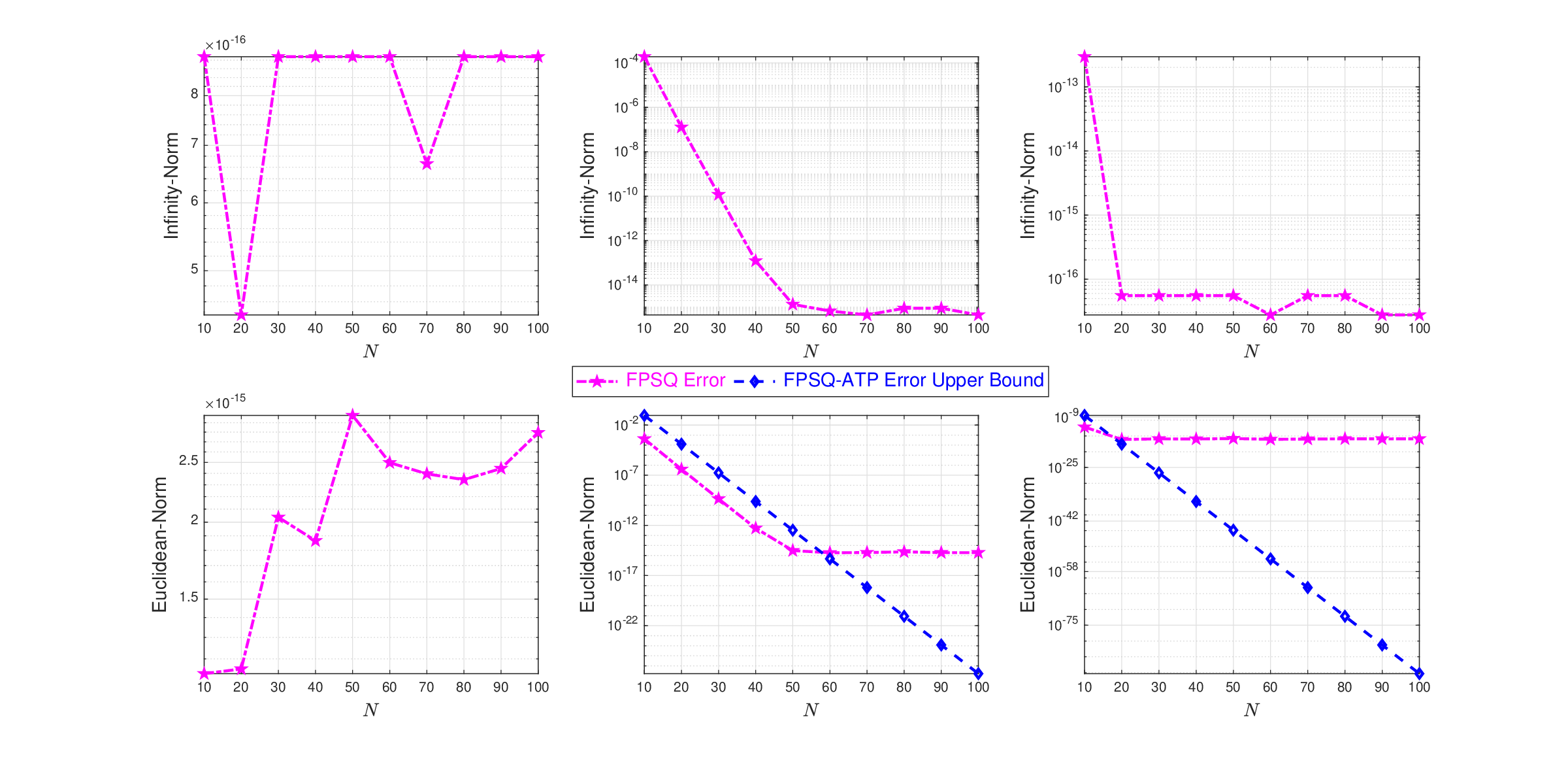}
\caption{The figure shows the infinity- and Euclidean- error norms in the FPSQ approximation of the three functions $\displaystyle{{f_1}(t) = 2\sin (3t - 1) + 1}$ (left upper and lower plots), $\displaystyle{{f_2}(t) = \frac{1}{{2 - \cos t}}}$ (middle upper and lower plots), and $\displaystyle{{f_3}(t) = \frac{1}{{{{\sin }^2}t + 16}}}$ (right upper and lower plots), for $N = 10:10:100$.}
\label{fig:FAccuracy1}
\end{figure*}

\section{The FIPS Method}
\label{sec:FPIMIRF1}
In this section we demonstrate the FIPS approach to discretize Problem $\C{IP}$. Using Formula \eqref{eq:PCase13Feb221} and following the writing convention introduced in Section \eqref{sec:PN}, we can approximate the performance index by the following formula:
\begin{equation}\label{eq:Dperind1}
J \approx J_N = \frac{1}{T} \Fthe_N\,g(\bmx(\bmt_N), \bmu(\bmt_N), \bmt_N) = \frac{1}{N} \bmone_N^t\,g(\bmx(\bmt_N), \bmu(\bmt_N), \bmt_N).
\end{equation}
Next, let $\bmF = [\bmF_{1, N}, \bmF_{2, N}, \ldots, \bmF_{n, N}]: \bmF_{i, N} = f_{i, 0:N-1}$ and $f_{i,j} = f_i(\bmx(t_{j}), \bmu(t_{j}), t_{j})\,\forall i \in \MBJ'_n, j \in \MBJ_N$, where $f_i$ is the $i$th-element of $\bmf\,\forall i$. Then
\begin{equation}
\C{I}_{t_i}^{(\tau)} {\bmf\cancbra{\bmx(\tau), \bmu(\tau), \tau}} \approx [\Fthe_i \bmF_{1,N}, \Fthe_i \bmF_{2,N}, \ldots, \Fthe_i \bmF_{n,N}]^t = (\Fthe_i \bmF)^t,
\end{equation}
$\forall i \in \MBJ_N$. Therefore, 
\begin{equation}\label{eq:ItN1}
\C{I}_{\bmt_N}^{(\tau)} {\bmf\cancbra{\bmx(\tau), \bmu(\tau), \tau}} \approx [(\Fthe_0 \bmF)^t, (\Fthe_1 \bmF)^t, \ldots, (\Fthe_{N-1} \bmF)^t]^t = \Fthe \bmF.
\end{equation}
Since 
\begin{align}
\bmF &= [\bmF_{1, N}, \bmF_{2, N}, \ldots, \bmF_{n, N}] = [\bmF_{1, N}^t; \bmF_{2, N}^t; \ldots; \bmF_{n, N}^t]^t\nonumber\\
&= \bmf(\bmx(\bmt_N), \bmu(\bmt_N), \bmt_N),\label{eq:IntRem1}
\end{align}
then Eqs. \eqref{eq:ItN1} and \eqref{eq:IntRem1} imply
\begin{equation}
\C{I}_{\bmt_N}^{(\tau)} {\bmf\cancbra{\bmx(\tau), \bmu(\tau), \tau}} \approx \Fthe \bmf(\bmx(\bmt_N), \bmu(\bmt_N), \bmt_N).
\end{equation}
Hence, collocating the system dynamics in integral form \eqref{eq:AppRedFIM1} at the Fourier nodes set $\MBS_N$ yields the following system of algebraic equations:
\begin{equation}\label{eq:DiscDynSysK1}
\bmx(\bmt_N) \approx \bmx(0) \otimes \bmone_N + \Mvec\left(\Fthe\,\bmf(\bmx(\bmt_N), \bmu(\bmt_N), \bmt_N)\right).
\end{equation}
Finally, the inequality path constraints at $\MBS_N$ reads
\begin{equation}\label{eq:hhineqcons1}
\bmc(\bmx(\bmt_N), \bmu(\bmt_N), \bmt_N) \le \F{O}_{N, p}.
\end{equation}
Problem $\C{IP}$ is now converted into a constrained NLP in which the goal is to minimize the discrete performance index \eqref{eq:Dperind1} subject to the equality constraints \eqref{eq:DiscDynSysK1} and the inequality constraints \eqref{eq:hhineqcons1}. The constrained NLP can be solved using well-developed optimization software for the unknowns $\bmx(\bmt_N)$ and $\bmu(\bmt_N)$, and the approximate optimal state and control variables can then be calculated at any point $t \in \FOmega_T$ through the Fourier PS expansions
\begin{equation}\label{eq:17}
\bmx(t) = \bmx\left(\bmt_N^t\right) \bsC{F}_{N}(t)\quad \text{and}\quad\bmu(t) = \bmu\left(\bmt_N^t\right) \bsC{F}_{N}(t),
\end{equation}
where $\bsC{F}_{N}(t) = \C{F}_{0:N-1}\cancbra{t}$. In this work, we shall use MATLAB fmincon solver employing the ``interior-point'' algorithm, considered to be one of the most efficient algorithms in numerical optimization, to solve the constrained NLP. We refer to the FIPS method applied together with the above NLP solver by the FIPS-fmincon method; moreover, the approximate optimal state- and control-variables obtained by this method are denoted by by $\tbmxs$ and $\tbmus$, respectively. Notice here that all integrals involved in Problem $\C{IP}$ were efficiently computed using matrix vector multiplication without the need for the FFT, since the FIM can provide nearly exact integrals with exponential rates of convergence using relatively coarse mesh grids as proven in Section \ref{sec:CRFSPF1}. This also shows that the FIPS method converges exponentially fast to the smooth, $T$-periodic solutions of Problem $\C{P}$, since the quadrature errors induced by the FIM are the main source of errors in the NLP described by \eqref{eq:Dperind1}, \eqref{eq:DiscDynSysK1}, and \eqref{eq:hhineqcons1}, which decay with exponential rates by Corollary \eqref{cor:2} and remarkably vanish for $\beta$-analytic integrand functions.

\section{Computational Results}
\label{sec:CRAC1}
In this section we apply the developed FIPS-fmincon method on two test problems well studied in the literature. The first problem has no inequality constraints on the state and control variables, while the second problem has lower bounds on the states and controls. Both test problems are instances of Problem $\C{P}$. All computations were carried out using MATLAB R2022a software installed on a personal laptop equipped with a 2.9 GHz AMD Ryzen 7 4800H CPU and 16 GB memory running on a 64-bit Windows 11 operating system. The fmincon solver was performed with initial guesses of all ones and was stopped whenever
\[\left\| {{\bm{x}^{(k + 1)}} - {\bm{x}^{(k)}}} \right\|_2 < {10^{ - 15}}\quad \text{or}\quad \left\|J_N^{(k+1)} - J_N^{(k)} \right\|_2 < {10^{ - 15}},\]
where $\bm{x}^{(k)}$ and $J_N^{(k)}$ denote the approximate minimizer and optimal cost function value at the $k$th iteration, respectively. The quality of the approximations are measured using the absolute discrete feasibility error (ADFE) \footnote{The ADFE at a collocation node measures the amount by which the approximate optimal solutions fail to satisfy the discrete constraint equations \eqref{eq:DiscDynSysK1} being used for the approximation at that node.}, $\bm{\C{E}}_N = (\C{E}_i)_{0 \le i \le n N-1}$, at the collocation points, which we define by
\begin{equation}\label{eq:DiscDynSysK2}
\bm{\C{E}}_N = \left|\bmx(0) \otimes \bmone_N + \Mvec\left(\Fthe\,\bmf(\bmx(\bmt_N), \bmu(\bmt_N), \bmt_N)\right) - \bmx(\bmt_N)\right|.
\end{equation}

\textbf{Problem 1.} Consider Problem $\C{P}$ with ${g}(\bmx(t), \bmu(t), t) = 0.5 x_1^2$ $+0.25 x_2^4-0.5 x_2^2+0.5 b u^2$ and $\bmf\left(\bmx(t),\bmu(t),t\right) = [x_2; u]$ with free periodic boundary conditions and a weighting parameter $b$ for the control variable. For $b \le 0.25$, the $\pi$-test guarantees the existence of periodic solutions which improve the performance index more than the static extremal solutions. Moreover, the quartic term in the integral cost function dominates the negative quadratic term for large values of $x_2$, thus bounding the amplitude of the optimal minimizing solutions; cf. \cite{evans1987solution}. This problem was solved earlier by \citet{evans1987solution} using the Lindstedt-Poincar\'{e} asymptotic series expansion and later by \citet{Elnagar2004707} using a FPS method. Figure \ref{fig:Fig1} shows the profiles of the approximate optimal state and control variables and the ADFEs obtained at the collocated nodes set $\MBS_{12}$ using the FIPS-fmincon method, for $T = 4.431736$ and $b = 0.2475$. The elapsed time required to run the FIPS-fmincon method and obtain the required approximations is about 0.209 seconds (s). Table \ref{tab:OFICAAPOD1} shows the approximate minimum values of the performance index obtained by the current method and the method of \cite{Elnagar2004707} for several values of $b, T$, and $2 m = N$. Clearly, the approximate optimal objective function values obtained by the FIPS-fmincon method are lower than those obtained by the method of \cite{Elnagar2004707} for all parameter variations shown in the table with negligible ADFEs that are approaching the machine epsilon.

\begin{figure}
\centering
\includegraphics[scale=0.24]{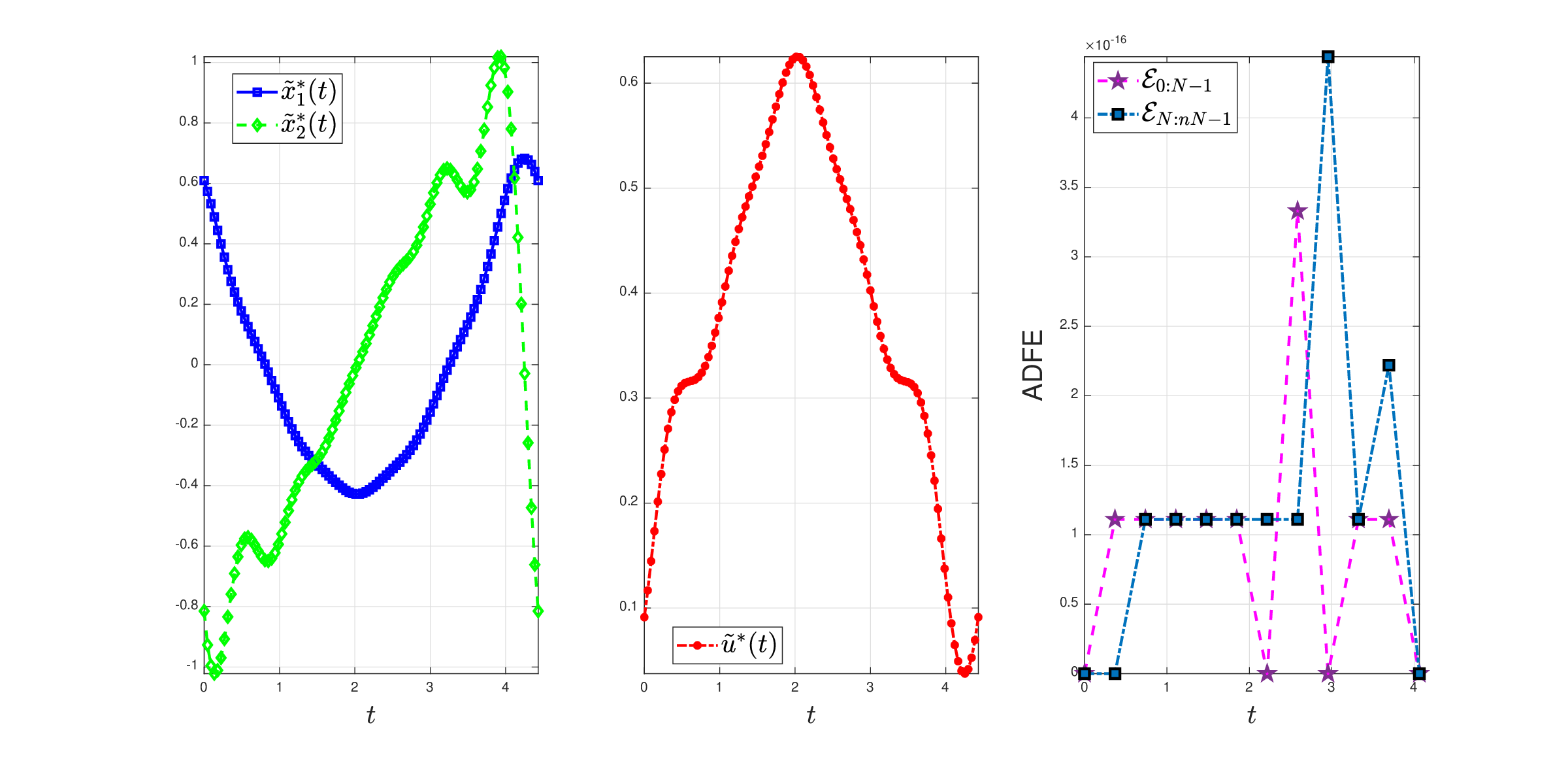}
\caption{The profiles of the approximate optimal state- and control-variables and the ADFEs obtained at the collocated nodes set $\MBS_{12}$ for Problem 1 using the FIPS-fmincon method. Here $n = 2, N = 12$, and the plots of the state and control variables were generated using 100 linearly spaced nodes in $\FOmega_{4.431736}$.}
\label{fig:Fig1}
\end{figure}

\begin{table}[t]
\caption{Comparisons between the FIPS-fmincon method and the method of \cite{Elnagar2004707} in terms of the approximate performance index minimum values obtained for several values of $b, T$, and $2 m = N$. The values of $J_{2m}, J_N$, and $\left\|ADFE\right\|_{\infty}$ are rounded to the displayed number of significant digits.} 
\centering 
\resizebox{0.9\columnwidth}{!}{%
\begin{tabular}{*{6}{c}} 
\hline\hline 
\multicolumn{6}{c}{Problem 1}\\
 & & & Method of \cite{Elnagar2004707} & \multicolumn{2}{c}{The FIPS-fmincon Method}\\
\cmidrule{5-6}
$b$ & $T$ & $2 m = N$ & $J_{2 m}$ & $J_N$ & $\left\|\text{ADFE}\right\|_{\infty}$\\[0.5ex]
\hline\hline
0.2475 & 4.431736 & 12 & $-4.18880200 \times 10^{-6}$ & $-4.02232772 \times 10^{-2}$ & $4.441 \times 10^{-16}$\\
0.2475 & 4.43173625 & 16 & $-4.18880203 \times 10^{-6}$ & $-4.08422489 \times 10^{-2}$ & $4.441 \times 10^{-16}$\\
0.2250 & 4.32786300 & 12 & $-4.38802000 \times 10^{-4}$ & $-4.42143743 \times 10^{-2}$ & $2.220 \times 10^{-16}$\\
0.2250 & 4.32786260 & 16 & $-4.38880203 \times 10^{-4}$ & $-4.48293692 \times 10^{-2}$ & $6.661 \times 10^{-16}$\\
0.1000 & 3.6343100 & 12 & $-1.97810000 \times 10^{-2}$ & $-7.75663473 \times 10^{-2}$ & $2.220 \times 10^{-16}$\\
0.1000 & 3.6343132 & 16 & $-1.97813000 \times 10^{-2}$ & $-7.81094298 \times 10^{-2}$ & $3.331 \times 10^{-16}$\\[1ex]
\hline
\end{tabular}
}
\label{tab:OFICAAPOD1} 
\end{table}

\textbf{Problem 2 (Solar Energy Control).} This problem is taken from Refs. \cite{dorato1979periodic,epperlein2020frequency} with slight modifications: Consider the following linear control mathematical model of a collector/storage/ enclosure type solar heating system
\begin{align}
{\left( {m {C_p}} \right)_E}{{\dot T}_E} &= {Q_{aux}} + {Q_S} - {\left( {U A} \right)_E}\left( {{T_E} - {T_A}} \right),\\
{\left( {m {C_p}} \right)_S}{{\dot T}_S} &= {Q_C} - {Q_S} - {\left( {U A} \right)_S}\left( {{T_S} - {T_A}} \right),
\end{align}
where $m$ is the mass (kg), $C_p$ is the specific heat coefficient (kJ C$^{-1}$ kg$^{-1}$), $A$ is the area (m$^2$), $U$ is the heat transfer coefficient (kJ h$^{-1}$ C$^{-1}$ m$^{-2}$), $Q_S(t)$ is the heat flow rate (kJ h$^{-1}$) from storage to enclosure, $Q_C(t)$ is the insolation or the heat flow rate (kJ hr$^{-1}$) from collector to storage, $Q_{aux}(t)$ is the heat flow rate (kJ h$^{-1}$) from auxiliary heat source to enclosure, $T_A(t)$ is the ambient temperature (${}^{\circ}$C), $T_S(t)$ is the storage temperature (${}^{\circ}$C), and $T_E(t)$ is the enclosure temperature (${}^{\circ}$C). The subscripts in the parameters denote values for storage element or enclosure element. Given the 24 h-periodic disturbance inputs $Q_C(t)$ and $T_A(t)$, we seek the 24 h-periodic control inputs, $Q_{aux}(t)$ and $Q_S(t)$, which minimize the following performance index
\begin{align}
J &= \frac{1}{{24}} \C{I}_{24}^{(t)} \left[ 1000{{\left( {{T_E} - {{\bT}_E}} \right)}^2} + 10{{\left( {{T_S} - {{\bT}_S}} \right)}^2} + 0.1{{\left( {{Q_{aux}} - {{\hat Q}_{aux}}} \right)}^2}\right.\nonumber\\
&\left.\qquad + {Q_{aux}} \right],
\end{align}
which gives the average steady-state value of an integral quadratic measure; in particular, it penalizes the enclosure and storage temperatures deviations from given set points and the auxiliary heat input from its mean value. The last term, integrated over a period, represents the average 
auxiliary energy consumed. The bars and hats above the variables denote fixed set levels and average levels, respectively. A depiction of the solar heating system is shown in Figure \ref{fig:Fig2}. The parameter values used for the solar heating system are shown in Table \ref{tab:Table2}. The ambient temperature and insolation are given by $T_A(t) = -10 \sin(\omega t)$ and $Q_C(t) = 13333 [1-\cos(\omega t)]$, respectively, where $\omega = \pi/12$ h$^{-1}$. Some care is required in this problem in order to avoid negative values of $Q_{aux}$ and $Q_S$ and unreasonable values of temperatures. We assume here that $Q_{aux}(t) \ge 8,000$ kJ h$^{-1}, Q_S(t) > 0$ kJ h$^{-1}$, and  $T_E(t), T_S(t) \ge 0 {}^{\circ}$C$\,\forall t \in \FOmega_{24}$.

\begin{figure}[t]
\centering
\includegraphics[scale=0.35]{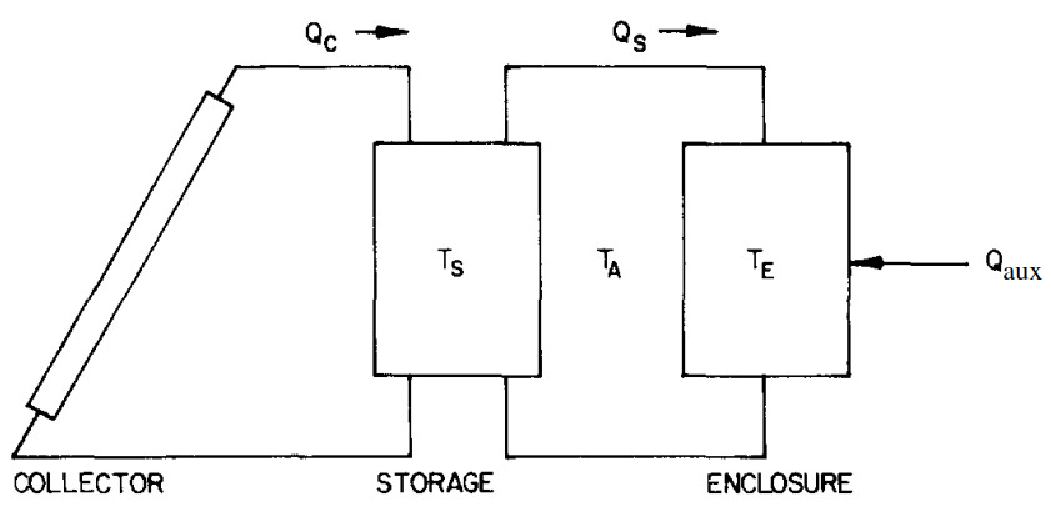}
\caption{A graph of the solar heating system as taken from Ref. \cite{dorato1979periodic}. The capital initials ``AUX'' are replaced with small letters.}
\label{fig:Fig2}
\end{figure}

\begin{table}[t]
\caption{Parameter values used for the solar heating system.} 
\centering 
\resizebox{0.5\columnwidth}{!}{%
\begin{tabular}{*{4}{c}} 
\hline\hline 
Parameter & $S$ & $E$ & Unit\\[0.5ex]
\hline\hline
$U A$ & 20.07 & 949.5 & kJ/(${}^{\circ}$C h)\\
$m C_p$ & 19000 & 18890 & kJ/(${}^{\circ}$C)\\
$\bT$ & 30 & 20 & ${}^{\circ}$C\\[1ex]
\hline
\end{tabular}
}
\label{tab:Table2} 
\end{table}

This problem has the form laid out in Section \ref{sec:PS1} with $\bmx = [T_E; T_S], \bmu = [Q_{aux}; Q_S], g\left(\bmx(t),\bmu(t),t\right) = 1000 (x_1-20)^2+10 (x_2-30)^2$ $+0.1(u_1-\hu_1)^2+u_1$,
\begin{equation}
{\bm{f}}\left(\bmx(t),\bmu(t),t\right) = \left[ {\begin{array}{*{20}{c}}
\displaystyle{{\frac{1}{{{{\left( {m{C_p}} \right)}_E}}}}\left( {{u_1} + {u_2} - {{\left( {UA} \right)}_E}\left( {{x_1} - {T_A}} \right)} \right)}\\
\displaystyle{{\frac{1}{{{{\left( {m{C_p}} \right)}_S}}}}\left( {{Q_C} - {u_2} - {{\left( {UA} \right)}_S}\left( {{x_2} - {T_A}} \right)} \right)}
\end{array}} \right],
\end{equation}
and $\bmc(\bmx(t), \bmu(t), t) = [-\bmx; 8000-u_1; \epsilon-u_2]$, where $\hu_1 = \frac{1}{24} \C{I}_{24}^{(t)} u_1$ is the average value of the heat flow rate from auxiliary heat source to enclosure over one period, and $\epsilon$ is a relatively small positive number to keep $u_2$ always positive. In practice, we may assume this constant to be the machine precision that is approximately equals $2.2204 \times 10^{-16}$ in double precision arithmetic. Figure \ref{fig:Fig3} shows the profiles of the approximate optimal temperatures distributions and heat flow rates as well as the absolute differences between the average temperatures and their set points and the ADFEs obtained at the collocated nodes set $\MBS_{50}$ using the FIPS-fmincon method. Observe how the FIPS-fmincon method brings $\hbmx - \bbmx$ as close to zero as possible, while on the other hand minimizes the cost of setting $\bmu$ to nonzero values by selecting $u_1$ to be nearly its lower bound. It is realistically consistent to observe through the plots that the storage is slowly heating up during the day and cools down during the night. A similar observation holds true for the enclosure which receives heat during the day due to the large thermal inertia $(m C_p)_E$.

\begin{figure*}
\centering
\includegraphics[scale=0.3]{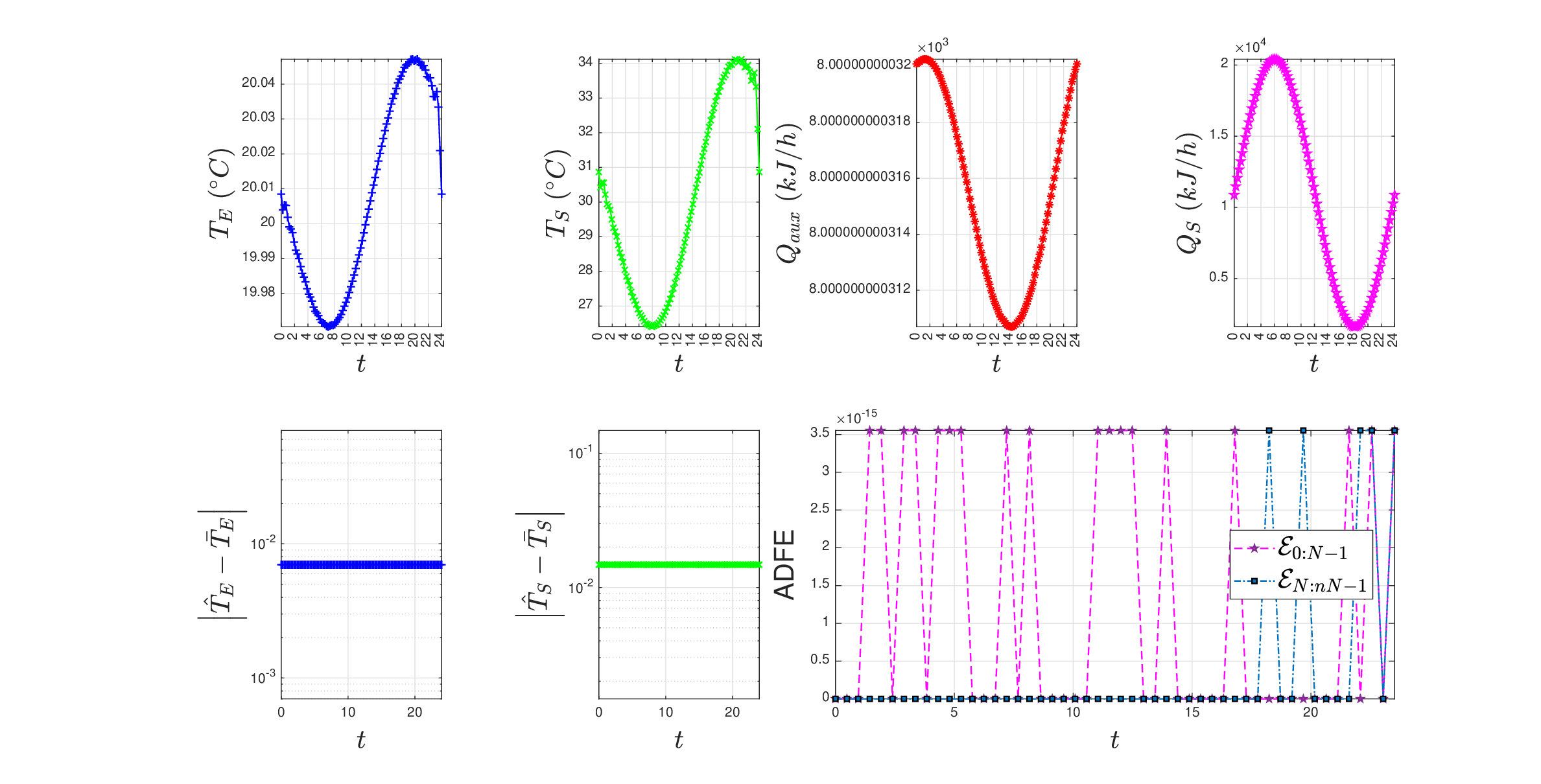}
\caption{The profiles of the approximate optimal temperatures distributions and heat flow rates (first row) as well as the absolute differences between the average temperatures and their set points and the ADFEs obtained at the collocated nodes set $\MBS_{50}$ (second row) for Problem 2 using the FIPS-fmincon method. Here $n = 2, N = 50$, and the plots of the temperatures and heat flow rates were generated using 100 linearly spaced nodes in $\FOmega_{24}$.}
\label{fig:Fig3}
\end{figure*}

\begin{rem}
For more complex OC problems with bang-bang periodic solutions, the reader may consult our recent works in \cite{Elgindy2022b,elgindy2023optimal}. For further challenging fractional OC problems with periodic solutions, we refer the reader to our recent works in \cite{elgindy2023fouriera,elgindy2023fourierb}.  
\end{rem}

\section{Conclusion}
\label{sec:Conc}
This paper presented a robust and computationally efficient Fourier-based algorithm for solving generally nonlinear, periodic OC problems. The proposed FIPS method promises further to be a very appropriate tool for the design of solar systems as demonstrated through numerical simulations. The developed method enjoys rapid convergence with exponential rate and superior flexibility in treating problems with and without inequality constraints in the absence of explicit use of extra necessary conditions of optimality. A key reason underlying the computationally streamlined nature of the approach is the automatic satisfaction of the state and control periodic conditions due to the periodicity of their Fourier interpolants. Another advantage of the approach lies in the integral reformulation of the system dynamics, which allows for using numerical integration operators widely known for their well-conditioning. The convergence of Fourier interpolation and quadrature is addressed in detail for smooth, $T$-Periodic Functions leading to the derivation of a sharp FPSQ error estimate as indicated by Ineq. \eqref{eq:Uppbnd1} and sustained by Figure \ref{fig:FAccuracy1}. The formulation and analysis presented here can be smoothly extended to other generally nonlinear, periodic OC problems in Mayer and Bolza forms. 

It is important to remind the reader that no single direct optimization method is perfect for all types of OC problems because different problems have different characteristics that make them more or less amenable to certain methods. For example, methods designed to solve OC problems with nonsmooth methods are less efficient and more computationally expensive than those designed to solve OC problems with sufficiently differentiable solutions. The latter are also less accurate than those designed to solve classes of problems with periodic, sufficiently differentiable solutions, and so on. The proposed FIPS method works best for periodic problems with sufficiently differentiable solutions, where it can can capture the smooth and periodic features of the solution very accurately with remarkable exponential convergence, even with a relatively small number of grid points. This is in contrast to finite difference and other traditional methods implementing nonperiodic basis functions like Chebyshev or Legendre bases function, for example, which typically converge more slowly than Fourier basis functions with higher costs. 

As a result of these advantages and the remarkable advantages of FIPS methods stated earlier in the Introduction Section, we consider the proposed FIPS method to be at the front line of the methods of choice for solving periodic problems with sufficiently differentiable solutions. For periodic problems with bang-bang solutions, the reader may consult \cite{Elgindy2022b,elgindy2023optimal} in which the solutions can be recovered in short time periods within excellent accuracies.

\section{Future Work}
\label{sec:FW1}
One possible future work is to explore the application of the costate mapping theory on Problem $\C{P}$ to ensure that the results of the reduced NLP obtained by the direct FIPS method meet the first-order necessary conditions of the OC problem in discrete manner. 

%
%
%
%
%
%

\section*{Declarations}
\subsection*{Competing Interests}
The author declares there is no conflict of interests.

\subsection*{Availability of Supporting Data}
The author declares that the data supporting the findings of this study are available within the article.

\subsection*{Ethical Approval and Consent to Participate and Publish}
Not Applicable.

\subsection*{Human and Animal Ethics}
Not Applicable.

\subsection*{Consent for Publication}
Not Applicable.

\subsection*{Funding}
The author received no financial support for the research, authorship, and/or publication of this article.

\subsection*{Authors' Contributions}
The author confirms sole responsibility for the following: study conception and design, data collection, analysis and interpretation of results, and manuscript preparation.

\subsection*{Acknowledgment}
Special thanks to Dr. Jonathan Epperlein\footnote{Dr. Jonathan Epperlein is a research scientist at IBM Research Europe.} for his fruitful discussions on the Solar Energy Control Problem.

\bibliographystyle{model1-num-names}
\bibliography{Bib}
\end{document}